\newdimen\unit\newdimen\psep\newcount\nd\newcount\ndx\newbox\dotb\newbox\ptbox
\newdimen\dx\newdimen\dy\newdimen\dxx\newdimen\dyy\newdimen\hgt
\newdimen\xoff\newdimen\yoff
\newcommand\clap[1]{\hbox to 0pt{\hss{#1}\hss}}
\newcommand\vdisk[1]{{\font\dotf=cmr10 scaled #1\dotf.}}
\newcommand\varline[2]{\setbox\dotb\hbox{\vdisk{#1}}\xoff=-.5\wd\dotb
\wd\dotb=0pt\yoff=-.5\ht\dotb\psep=#2\ht\dotb}
\newcommand\varpt[1]{\setbox\ptbox\clap{\vdisk{#1}}\setbox\ptbox
\hbox{\raise-.5\ht\ptbox\box\ptbox}}
\newcommand\cpt{\copy\ptbox}
\newcommand\point[3]{\rlap{\kern#1\unit\raise#2\unit\hbox{#3}}}
\newcommand\setnd[4]{\dx=#3\unit\advance\dx-#1\unit\divide\dx by\psep
\dy=#4\unit\advance\dy-#2\unit\divide\dy by\psep \multiply\dx
by\dx\multiply\dy by\dy\advance\dx\dy\nd=1\advance\dx-1sp
\loop\ifnum\dx>0\advance\dx-\nd sp\advance\nd1\advance\dx-\nd
sp\repeat}
\newcommand\dl[4]{{\setnd{#1}{#2}{#3}{#4}\dline{#1}{#2}{#3}{#4}\nd}}
\newcommand\dline[5]{{\nd=#5\hgt=#2\unit\dx=#3\unit\advance\dx-#1\unit
\divide\dx by\nd\dy=#4\unit\advance\dy-#2\unit\divide\dy by\nd
\advance\hgt\yoff\rlap{\kern#1\unit\kern\xoff\loop\ifnum\nd>1\advance\nd-1
\advance\hgt\dy\kern\dx\raise\hgt\copy\dotb\repeat}}}
\newcommand\ellipse[4]{\qellip{#1}{#2}{#3}{#4}\qellip{#1}{#2}{#3}{-#4}%
\qellip{#1}{#2}{-#3}{#4}\qellip{#1}{#2}{-#3}{-#4}}
\newcommand\qellip[4]{{\setnd{0}{0}{#3}{#4}\dx=\unit\dy=0pt\raise\yoff\rlap{%
\kern#1\unit\kern\xoff\raise#2\unit\hbox{\loop\ifnum\dx>0\rlap{\kern#3\dx
\raise#4\dy\copy\dotb}\hgt=\dx\divide\hgt
by\nd\advance\dy\hgt\hgt=\dy \divide\hgt
by\nd\advance\dx-\hgt\repeat\rlap{\raise#4\dy\copy\dotb}}}}}
\newcommand\bez[6]{{\setnd{#1}{#2}{#3}{#4}\ndx=\nd\setnd{#3}{#4}{#5}{#6}
\ifnum\ndx>\nd\nd=\ndx\fi\dx=#3\unit\advance\dx-#1\unit\dy=#4\unit
\advance\dy-#2\unit\dxx=#5\unit\advance\dxx-#1\unit\dyy=#6\unit\advance
\dyy-#2\unit\advance\dxx-2\dx\advance\dyy-2\dy\divide\dxx
by\nd\divide\dyy
by\nd\advance\dx.25\dxx\advance\dy.25\dyy\divide\dx
by\nd\divide\dy by\nd \multiply\nd
by2\dx=100\dx\dy=100\dy\dxx=100\dxx\dyy=100\dyy\divide\dxx by\nd
\divide\dyy
by\nd\hgt=#2\unit\raise\yoff\rlap{\kern#1\unit\kern\xoff
\raise\hgt\copy\dotb\loop\ifnum\nd>0\advance\nd-1\advance\hgt0.01\dy
\kern0.01\dx\raise\hgt\copy\dotb\advance\dx\dxx\advance\dy\dyy\repeat}}}
\newcommand\ptu[3]{\point{#1}{#2}{\cpt\raise1ex\clap{$\scriptstyle{#3}$}}}
\newcommand\ptd[3]{\point{#1}{#2}{\cpt\raise-1.8ex\clap{$\scriptstyle{#3}$}}}
\newcommand\ptr[3]{\point{#1}{#2}{\cpt\raise-.4ex\rlap{$\ \scriptstyle{#3}$}}}
\newcommand\ptl[3]{\point{#1}{#2}{\cpt\raise-.4ex\llap{$\scriptstyle{#3}\ $}}}
\newcommand\ptlu[3]{\point{#1}{#2}{\raise.8ex\clap{$\scriptstyle{#3}$}}}
\newcommand\ptld[3]{\point{#1}{#2}{\raise-1.6ex\clap{$\scriptstyle{#3}$}}}
\newcommand\ptlr[3]{\point{#1}{#2}{\raise-.4ex\rlap{$\,\scriptstyle{#3}$}}}
\newcommand\ptll[3]{\point{#1}{#2}{\raise-.4ex\llap{$\scriptstyle{#3}\,$}}}
\newcommand\pt[2]{\point{#1}{#2}{\cpt}}
\newcommand\thnline{\varline{400}{.6}}
\newcommand\dotline{\varline{1000}{4}}
\newtheorem{thm}{Theorem}
\newtheorem{prop}[thm]{Proposition}
\newtheorem{lem}[thm]{Lemma}
\newtheorem{clm}[thm]{Claim}
\newtheorem{pro}[thm]{Problem}
\newtheorem*{pro1a}{Problem 1$'$}
\newtheorem*{pro2a}{Problem 2$'$}
\newtheorem*{rmk}{Remark}
\def\diam{\textup{diam}}
\begin{document}

\title{The size of graphs with restricted\\
rainbow $2$-connection number}

\author{Shinya Fujita\\[1.5ex]
\small School of Data Science, Yokohama City University,\\
\small Yokohama 236-0027, Japan\\
\small \tt fujita@yokohama-cu.ac.jp\\
[2.5ex]
Henry Liu\\[1.5ex]
\small School of Mathematics, Sun Yat-sen University,\\
\small Guangzhou 510275, China\\
\small\tt liaozhx5@mail.sysu.edu.cn\\
[2.5ex]
Boram Park\\[1.5ex]
\small Department of Mathematics, Ajou University,\\
\small Suwon 16499, Republic of Korea \\
\small\tt borampark@ajou.ac.kr
}


\date{4 September 2020}

\maketitle
\makeatother

\begin{abstract}
Let $k$ be a positive integer, and $G$ be a $k$-connected graph. An edge-coloured path is \emph{rainbow} if all of its edges have distinct colours. The \emph{rainbow $k$-connection number} of $G$, denoted by $rc_k(G)$, is the minimum number of colours in an edge-colouring of $G$ such that, any two vertices are connected by $k$ internally vertex-disjoint rainbow paths. The function $rc_k(G)$ was introduced by Chartrand, Johns, McKeon and Zhang in 2009, and has since attracted significant interest. Let $t_k(n,r)$ denote the minimum number of edges in a $k$-connected graph $G$ on $n$ vertices with $rc_k(G)\le r$. Let $s_k(n,r)$ denote the maximum number of edges in a $k$-connected graph $G$ on $n$ vertices with $rc_k(G)\ge r$. The functions $t_1(n,r)$ and $s_1(n,r)$ have previously been studied by various authors. In this paper, we study the functions $t_2(n,r)$ and $s_2(n,r)$. We determine bounds for $t_2(n,r)$ which imply that $t_2(n,2)=(1+o(1))n\log_2 n$, and $t_2(n,r)$ is linear in $n$ for $r\ge 3$. We also provide some remarks about the function $s_2(n,r)$.\\

\noindent\textbf{AMS Subject Classificiation (2010):} 05C15, 05C35, 05C40\\

\noindent\textbf{Keywords:} Edge-colouring, $k$-connected graph, Rainbow $(k$-)connection number
\end{abstract}


\section{Introduction}\label{Intro}

All graphs in this paper are finite, simple and undirected. For a graph $G$ and vertices $x,y\in V(G)$, let $d_G(x,y)$ denote the distance (i.e., the length of a shortest path) from $x$ to $y$ in $G$, and let  $\diam(G)=\max\{d_G(x,y):x,y\in V(G)\}$ denote the diameter of $G$. Let $\deg_G(x)$ denote the degree of $x$ in $G$. For $X\subset V(G)$, let $G[X]$ denote the subgraph of $G$ induced by $X$. For disjoint subsets $X,Y\subset V(G)$, let $E_G(X,Y)$ denote the set of edges of $G$ with one end-vertex in $X$ and the other in $Y$. For a path $P$ and $x,y\in V(P)$ (possibly $x=y$), we write $xPy$ for the subpath of $P$ with end-vertices $x$ and $y$. A path $P=x_0x_1\cdots x_\ell$, for some $\ell\ge 1$, is called an \emph{$\ell$-ear} of $G$, or simply an \emph{ear}, if $V(P)\cap V(G)=\{x_0,x_\ell\}$ and $E(P)\cap E(G)=\emptyset$. For $r\ge 1$, an \emph{$r$-edge-colouring} of $G$, or simply an \emph{$r$-colouring}, is a function $c:E(G)\to\{1,\dots,r\}$. We think of $\{1,\dots,r\}$ as a set of colours, and occasionally we use the terms \emph{red}, \emph{blue} and \emph{green} if we are focussing on a small number of colours. We may use the terms \emph{edge-colouring} or \emph{colouring} if we do not wish to emphasize the number of colours. For any other undefined terms, the reader is referred to the books \cite{Bol78,Bol98, Die18}.

Let $k$ be a positive integer. A set of internally vertex-disjoint paths connecting two vertices in a graph will simply be called \emph{disjoint}. By Menger's theorem \cite{Men27}, a graph is $k$-connected if and only if every two vertices have $k$ disjoint paths connecting them. An edge-coloured path is \emph{rainbow} if all of its edges have distinct colours. Let $G$ be a $k$-connected graph. An edge-colouring of $G$, not necessarily proper, is \emph{rainbow $k$-connected} if every two vertices have $k$ disjoint rainbow paths connecting them. The \emph{rainbow $k$-connection number} of $G$, denoted by $rc_k(G)$, is the minimum possible number of colours in a rainbow $k$-connected colouring of $G$. Note that $rc_k(G)$ is well-defined if and only if $G$ is $k$-connected. We write $rc(G)$ for $rc_1(G)$. The parameter $rc_k(G)$ was introduced by Chartrand, Johns, McKeon and Zhang (\cite{CJMZ08} for $k = 1$ (2008), and \cite{CJMZ09} for general $k$ (2009)), and has since attracted significant interest from many researchers. For an informative survey and book on the subject of rainbow connection of graphs, see \cite{LSS13,LS12}.

We consider the following two problems.

\begin{pro}\label{prob1}
For integers $r,k\ge 1$, let $t_k(n, r)$ denote the minimum number of edges in a $k$-connected graph $G$ on $n$ vertices with $rc_k(G)\le r$. Determine $t_k(n,r)$.
\end{pro}

\begin{pro}\label{prob2}
For integers $r,k\ge 1$, let $s_k(n, r)$ denote the maximum number of edges in a $k$-connected graph $G$ on $n$ vertices with $rc_k(G)\ge r$. Determine $s_k(n,r)$.
\end{pro}

We note that Problems \ref{prob1} and \ref{prob2} have the following equivalent formulations.

\begin{pro1a}\label{prob1a}
Given integers $r,k\ge 1$, compute the maximum integer $g_k(n, r)$ such that, if $G$ is a $k$-connected graph on $n$ vertices and $|E(G)| \le g_k(n, r)$, then $rc_k(G) \ge r$.
\end{pro1a}

\begin{pro2a}\label{prob2a}
Given integers $r,k\ge 1$, compute the minimum integer $f_k(n, r)$ such that, if $G$ is a $k$-connected graph on $n$ vertices and $|E(G)| \ge f_k(n, r)$, then $rc_k(G) \le r$.
\end{pro2a}
It is easy to see that $t_k(n, r) = g_k(n, r+1) + 1$ and $s_k(n, r) = f_k(n, r - 1) - 1$, whenever the terms are defined. Problems \ref{prob1} and \ref{prob2} are  ``Erd\H{o}s-Gallai type problems''. Loosely speaking, an  Erd\H{o}s-Gallai type problem involves the study of the bounds of a graph parameter (such as the number of edges), among all graphs that satisfy some property. For the case $k=1$, note that if $G$ is a connected graph on $n$ vertices, then we have $|E(G)|\ge n-1$, with equality if and only if $G$ is a tree. Also, we have $1\le \diam(G)\le rc(G)\le n-1$, with $rc(G)= n-1$ if and only if $G$ is a tree. It is easy to see that
\begin{align}
{n\choose 2} &= t_1(n,1)\ge t_1(n,2)\ge\cdots\ge t_1(n,n-1)=t_1(n,n)=\cdots=n-1,\label{t1chain}\\
{n\choose 2} &= s_1(n,1)\ge s_1(n,2)\ge\cdots\ge s_1(n,n-1)=n-1,\label{s1chain}
\end{align}
and thus it suffices to consider $1\le r\le n-1$ in the above problems. Problem \ref{prob1} for $k=1$ was first considered by Schiermeyer \cite{Sch13}, when he determined the values of $t_1(n,r)$ for $\frac{n}{2}\le r\le n-1$, and the asymptotic answer for $r=2$. Subsequently, Bode and Harborth \cite{BH13}, and Li et al.~\cite{LLSZ14} made some improvements, and Lo \cite{Lo14} determined the values of $t_1(n,r)$ for $3\le r<\frac{n}{2}$. Thus Problem \ref{prob1} is essentially completely solved for $k=1$.

\begin{thm}\label{t1thm}\textup{\cite{BH13,Lo14,Sch13}}
Let $1\le r\le n-1$ and $n\ge 3$. Then
\begin{equation}\label{t1eq}
t_1(n,r)=
\left\{
\begin{array}{ll}
\displaystyle {n\choose 2} & \textup{\emph{if} }r=1\textup{\emph{,}}\\[2ex]
(1+o(1))n \log_2 n & \textup{\emph{if} }r=2\textup{\emph{,}}\\[1ex]
\displaystyle \Big\lceil\frac{r(n-2)}{r-1}\Big\rceil & \textup{\emph{if} }r\ge 3.
\end{array}
\right.
\end{equation}
\end{thm}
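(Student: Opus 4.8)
The statement is a compilation of results from three papers, so I would treat each of the three regimes separately and cite the appropriate source, while sketching the idea behind each.

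The case $r=1$ is immediate: if $rc_1(G)\le 1$, then every pair of vertices is joined by a rainbow path using a single colour, i.e.\ by an edge, so $G=K_n$ and $|E(G)|=\binom n2$; conversely $rc_1(K_n)=1$. Hence $t_1(n,1)=\binom n2$.

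For $r=2$, the lower bound is the substantive part, due to Schiermeyer \cite{Sch13}: one shows that if $rc(G)\le 2$ then $G$ has diameter at most $2$, and a graph of diameter $2$ on $n$ vertices with few edges cannot have a $2$-colouring in which every pair is joined by a rainbow path of length $\le 2$, because each vertex $v$ together with its at most $2$ colour classes at $v$ can ``reach'' only boundedly many vertices along bichromatic paths of length $2$; a counting/entropy argument then forces $|E(G)|\ge(1-o(1))n\log_2 n$. The matching upper bound is a construction (a suitable blow-up of a hypercube-like graph, or the bipartite-type construction in \cite{BH13}) of diameter $2$ on $n$ vertices with $(1+o(1))n\log_2 n$ edges admitting a rainbow-connected $2$-colouring; I would present the construction explicitly and exhibit the colouring, verifying that every pair has a rainbow path of length $\le 2$.

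For $r\ge 3$, I would give Lo's argument \cite{Lo14} in full since it is clean. For the upper bound, build $G$ as a ``caterpillar-like'' graph: take a spine and attach blocks so that $G$ is a tree-like structure in which the longest distance is $r$; more precisely, one arranges $n-2$ vertices into paths of length $r-1$ hung off two hubs, giving exactly $\lceil r(n-2)/(r-1)\rceil$ edges, and colours each path rainbow with colours $1,\dots,r-1$ repeating, so that every pair of vertices is joined by a rainbow path of length $\le r$. For the lower bound, suppose $G$ is connected on $n$ vertices with $rc(G)\le r$; fix an optimal $r$-colouring and, for each vertex, a rainbow path of length $\le r$ to a fixed root. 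These paths form a spanning connected subgraph in which we bound the number of vertices of degree $\le 2$: a rainbow path has length $\le r$, so it can contain at most $r-1$ internal vertices of degree $2$ before a branch vertex is forced, and a degree argument (summing degrees, using that branch vertices are few) yields $|E(G)|\ge \lceil r(n-2)/(r-1)\rceil$. The main obstacle here is organising the degree-counting so that the ceiling comes out exactly; this is where Lo's careful bookkeeping on the number of ``long'' degree-$2$ segments is needed, and I would follow it closely.

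The chains \eqref{t1chain} and \eqref{s1chain} show it suffices to treat $1\le r\le n-1$, and the stated equalities at the ends ($t_1(n,r)=n-1$ for $r\ge n-1$, realised by a path $P_n$ with $rc(P_n)=n-1$) are immediate, so no separate argument is needed there.
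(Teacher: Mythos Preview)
The paper does not actually prove Theorem~\ref{t1thm}; it is stated as a known result with citations to \cite{BH13,Lo14,Sch13}, and the paper only follows the statement with a brief description of the \emph{upper bound} constructions. So there is no proof in the paper to compare your lower-bound sketches against; your plan to cite the relevant sources and sketch the arguments is already more than the paper does.

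Where a direct comparison is possible---the upper-bound constructions---your description diverges from the paper in the $r\ge 3$ case. The paper's construction (attributed to \cite{BH13}) is: take $\lfloor\frac{n-3}{r-1}\rfloor$ cycles on $r$ vertices all sharing a single common vertex, and attach the remaining vertices as pendants to that common vertex; this yields exactly $\lceil\frac{r(n-2)}{r-1}\rceil$ edges. Your ``caterpillar-like'' graph with ``paths of length $r-1$ hung off two hubs'' is a different object, and as stated it is too vague to verify the edge count or the colouring. If you want to match the paper, use the one-hub-with-cycles construction; if you want your own, you need to spell it out precisely and check both the edge count and that $rc(G)\le r$. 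For $r=2$ your reference to a bipartite-type construction does align with the paper, which uses $K_{a,n-a}$ with $2^{a-1}+a\le n\le 2^a+a$.
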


For $r\ge 2$, the constructions of a graph $G$ on $n$ vertices yielding the upper bounds in (\ref{t1eq}) are as follows. For $r=2$, we take $G$ to be the complete bipartite graph $K_{a,n-a}$ such that $2^{a-1}+a\le n\le 2^a+a$, so that $|E(G)|=(1+o(1))n\log_2n$. For $3\le r\le n-1$, we take $\lfloor\frac{n-3}{r-1}\rfloor$ cycles on $r$ vertices having one vertex in common, and connect the remaining $n - 1 -\lfloor\frac{n-3}{r-1}\rfloor(r-1)\le r$ vertices to the common vertex of the cycles. Then $|E(G)|=\lceil\frac{r(n-2)}{r-1}\rceil $. For these graphs, it was shown in \cite{CJMZ08} and \cite{BH13} that $rc(G)=r$.

Problem \ref{prob2}$'$ for $k=1$ was first considered by Kemnitz and Schiermeyer \cite{KS11}, and their results were subsequently improved by Kemnitz et al.~\cite{KPSW13}, and Li et al.~\cite{LLS13}. When stated in terms of $s_1(n,r)$, we have the following result.

\begin{thm}\label{f1thm}\textup{\cite{KPSW13,KS11,LLS13}}
Let $2\le r\le n-1$. Then $s_1(n,r)\ge {n-r+2\choose 2}+r-3$. Equality holds for $r\in\{2, 3, 4,5, n - 5, n -4, n -3, n -2, n -1\}$.
\end{thm}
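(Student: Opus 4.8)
To prove the lower bound, for each $2\le r\le n-1$ I would exhibit one explicit graph. Put $m=n-r+2$ (so $m\ge 3$), take a complete graph $K_m$ on vertices $w_0,\dots,w_{m-1}$, delete the edge $w_0w_1$, and attach at $w_0$ a pendant path $w_0u_1\cdots u_{r-2}$ of length $r-2$ (empty when $r=2$); call the result $G_{n,r}$, so that $G_{n,2}=K_n-w_0w_1$. Then $G_{n,r}$ is connected with $m+(r-2)=n$ vertices and $\binom{m}{2}-1+(r-2)=\binom{n-r+2}{2}+r-3$ edges. Next I would check that $\diam(G_{n,r})=r$: a farthest pair is $u_{r-2},w_1$, at distance $(r-2)+2=r$ (run along the pendant path to $w_0$, then along $w_0w_jw_1$ for any $j\ge 2$), and no pair is farther; when $r=2$ this is simply $\diam(K_n-e)=2$. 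Since $rc_1(G)\ge\diam(G)$ for every connected $G$, this gives $rc_1(G_{n,r})\ge r$, and hence $s_1(n,r)\ge\binom{n-r+2}{2}+r-3$.

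For the matching upper bound for the listed values of $r$, I would prove the equivalent statement that every connected $n$-vertex graph $G$ with $|E(G)|\ge\binom{n-r+2}{2}+r-2$ satisfies $rc_1(G)\le r-1$, splitting into a dense regime $r\in\{2,3,4,5\}$ and a sparse regime $r\in\{n-5,\dots,n-1\}$. In the dense regime, writing $k=r-1\in\{1,2,3,4\}$, one must show that $|E(G)|\ge\binom{n-k+1}{2}+k-1$ forces $rc_1(G)\le k$. For $k=1$ this says $G=K_n$. For $k\ge 2$ the complement $\overline G$ has at most $(k-1)(n-1)-\binom{k}{2}$ edges, hence is very sparse and $G$ has small diameter; the plan is to examine the few edges of $\overline G$ componentwise, to locate a small ``hub'' through which almost every pair of vertices already has a path of length at most $k$, to colour a large (near-)complete part of $G$ with a single colour, and to distribute the remaining $k-1$ colours among the edges meeting the hub so that every pair obtains a rainbow path of length at most $k$, the distribution being chosen robustly enough to cover simultaneously all admissible complements with that many edges. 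The case $k=2$ is the theorem of Kemnitz and Schiermeyer, and $k=3,4$ follow the same idea with a longer case analysis.

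In the sparse regime, writing $r=n-j$ with $j\in\{1,\dots,5\}$, the claim becomes that $rc_1(G)\ge n-j$ forces $|E(G)|\le n-2+\binom{j+1}{2}$. For $j=1$ this is immediate, since $rc_1(G)=n-1$ exactly when $G$ is a tree. For $j\ge 2$ the plan is, given $G$ with more edges, to build a colouring with fewer than $n-j$ colours: take a spanning tree $T$, add back a few of the remaining edges to obtain a spanning subgraph that is a tree together with a small ``core'' (a short cycle, a bounded number of edge-disjoint cycles, or a small complete-like block), give the acyclic edges distinct colours, and colour the core with far fewer colours than it has edges, using the small values $rc(C_g)=\lceil g/2\rceil$ for $g\ge 4$, $rc(C_3)=1$ and $rc(K_t)=1$; one then argues that more than $n-2+\binom{j+1}{2}$ edges always permit such a spanning subgraph with a saving of at least $j$ colours over the tree bound $n-1$, the borderline configurations being exactly the graphs $G_{n,r}$.

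The step I expect to be the real obstacle, in both regimes, is pinning down the \emph{exact} constant rather than one off by an additive term. In the dense regime for $k=3,4$ one must verify that a single placement of the $k-1$ colours on the edges at the hub rainbow-connects $G$ for \emph{every} admissible sparse complement, and this bookkeeping is delicate. Dually, in the sparse regime for $j=3,4,5$ one needs the tight structural dichotomy that a connected $n$-vertex graph with more than $n-2+\binom{j+1}{2}$ edges always admits a colouring beating the tree bound $n-1$ by $j$, equivalently that the only connected $n$-vertex graphs whose $rc_1$ is as large as $n-j$ are, up to their number of edges, of the form $G_{n,r}$. Making this dichotomy precise is where the difficulty concentrates, and is presumably why the theorem leaves the intermediate range $6\le r\le n-6$ open.
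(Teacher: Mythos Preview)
Your lower-bound construction is exactly the one the paper records (in the paragraph immediately following the theorem): take $K_{n-r+2}$, delete one edge $xy$, and attach a pendant path on $r-1$ vertices at $x$. Your $G_{n,r}$ is this graph verbatim, and your verification $rc_1(G_{n,r})\ge\diam(G_{n,r})=r$ is the same as the paper's. So for the only part of the statement the paper itself argues, your proof coincides with it.

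The equality assertions for $r\in\{2,3,4,5,n-5,\dots,n-1\}$ are not proved in this paper at all; they are quoted from \cite{KS11,KPSW13,LLS13}. Hence there is no in-paper proof to compare your upper-bound sketch against. Your outline --- analysing the sparse complement and colouring through a hub in the dense regime, and in the sparse regime augmenting a spanning tree by a small cyclic core and exploiting $rc(C_g)=\lceil g/2\rceil$ --- is broadly in the spirit of those references, and you are right that the delicate point is nailing the exact additive constant. But as it stands this is a plan, not a proof: the phrases ``distribute the remaining $k-1$ colours \dots\ robustly enough'' and ``more than $n-2+\binom{j+1}{2}$ edges always permit such a spanning subgraph'' hide precisely the case analysis that occupies several pages in the cited papers, and nothing in the sketch indicates how those cases would actually be dispatched. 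If you intend to reprove the equality cases rather than cite them, the sketch would need to be expanded substantially before it could be assessed for correctness.
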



The graph $G$ which yields the lower bound in Theorem \ref{f1thm}, as given in \cite{KS11}, is as follows. Take a complete graph $K_{n-r+2}$ and delete one edge $xy$. Then take a path on $r-1$ vertices and identify one end-vertex with $x$. Let $G$ be the resulting graph. We have $|E(G)|= {n-r+2\choose 2}+r-3$, and $rc(G)\ge\diam(G)=r$.

Finally, note that for $1\le k<\ell<n$, we have
\begin{equation}
t_k(n,r)\le t_\ell(n,r),\label{tktleq}
\end{equation}
whenever the terms are defined.

We remark that analogous Erd\H{o}s-Gallai type problems have also been considered for other parameters similar to the rainbow connection number, such as the monochromatic connection number \cite{CLW17}, and rainbow disconnection number \cite{BCLH19,BL19,CDHHZ18}, among others.

In this paper, we will focus on Problems \ref{prob1} and \ref{prob2} for the case $k=2$. This paper is organised as follows. In Section \ref{toolssect}, we gather some auxiliary results about $k$-connected graphs and the rainbow $k$-connection number. In Section \ref{t2(n,r)sect},
we study the function $t_2(n,r)$. Our main result will be Theorem \ref{t2(n,r)thm}, from which we can conclude that $t_2(n,2)=(1+o(1))n\log_2 n$, and $t_2(n,r)$ is linear in $n$ for $r\ge 3$. In Section \ref{s2(n,r)sect}, we provide some remarks about the function $s_2(n,r)$.


\section{Tools and some related results}\label{toolssect}

In this section, we gather some results which will be useful for Sections \ref{t2(n,r)sect} and \ref{s2(n,r)sect}. Throughout this section, let $n>k\ge 2$.

We first note that any $k$-connected graph $G$ on $n$ vertices has at least $\lceil\frac{kn}{2}\rceil$ edges, since $G$ has minimum degree at least $k$. Harary \cite{Har62} gave examples of $k$-connected graphs which show that the value $\lceil\frac{kn}{2}\rceil$ is best possible for the minimum number of edges.

\begin{thm}\label{Harthm} \emph{\cite{Har62}}
Let $n>k\ge 2$, and $G$ be a $k$-connected graph on $n$ vertices. Then $|E(G)|\ge\lceil\frac{kn}{2}\rceil$. Moreover, there exists a $k$-connected graph $H_{n,k}$ on $n$ vertices with $|E(H_{n,k})|=\lceil\frac{kn}{2}\rceil$.
\end{thm}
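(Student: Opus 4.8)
The proof splits along the theorem's two assertions. The inequality $|E(G)|\ge\lceil kn/2\rceil$ is immediate: a $k$-connected graph on $n>k$ vertices has minimum degree at least $k$ (if $\deg_G(v)<k$ then $N_G(v)$ is a vertex cut of size less than $k$, using $n>k\ge\deg_G(v)+1$), so $2|E(G)|=\sum_{v\in V(G)}\deg_G(v)\ge kn$, and as $|E(G)|$ is an integer we get $|E(G)|\ge\lceil kn/2\rceil$.

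For the second assertion I would exhibit $H_{n,k}$ explicitly as the \emph{Harary graph}. Identify the vertex set with $\mathbb{Z}_n=\{0,1,\dots,n-1\}$, arranged cyclically, and put $m=\lfloor k/2\rfloor$. In every case include the ``short'' edges $\{i,j\}$ for which the cyclic distance between $i$ and $j$ is at most $m$; these alone form a $2m$-regular graph with $mn$ edges. If $k=2m$, stop. If $k=2m+1$ and $n$ is even, additionally include the $n/2$ ``diameter'' edges $\{i,i+n/2\}$ for $0\le i<n/2$, yielding a $(2m+1)$-regular graph with $mn+n/2=\lceil kn/2\rceil$ edges. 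If $k=2m+1$ and $n$ is odd, additionally include the edges $\{0,(n-1)/2\}$, $\{0,(n+1)/2\}$ and $\{i,i+(n+1)/2\}$ for $1\le i\le(n-3)/2$; then vertex $0$ has degree $2m+2$, every other vertex has degree $2m+1$, and the edge count is again $\lceil kn/2\rceil$. A routine count confirms $|E(H_{n,k})|=\lceil kn/2\rceil$ in all cases, so it remains to verify that $H_{n,k}$ is $k$-connected.

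For this, by Menger's theorem it suffices to show that deleting any vertex set $S$ with $|S|\le k-1$ leaves $H_{n,k}$ connected; fix two surviving vertices $u$ and $v$. When $k=2m$, consider the two arcs of the cycle with endpoints $u$ and $v$; since $|S|\le 2m-1$, one of them, say $A$, contains at most $m-1$ vertices of $S$. Walking along $A$ from $u$ toward $v$, at each step the next $m$ cycle-positions ahead lie in $A$ (or include $v$) and hold at most $m-1$ vertices of $S$, hence contain a survivor joined to the current vertex by a short edge; iterating reaches $v$. For $k=2m+1$ the same arc argument disposes of all but at most one vertex of $S$ on the chosen arc, and this last vertex is bypassed using a long chord: in the $n$ even case the diameter edges let one ``jump over'' the obstructing block, while in the $n$ odd case the near-diameter chords through vertex $0$, together with one short step, play the same role. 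Carrying out this last reduction cleanly — tracking the relative cyclic positions of $u$, $v$ and the vertices of $S$, and checking that a usable chord always survives in every configuration — is the main obstacle, and it is most delicate in the $n$ odd subcase because there the long chords do not form a perfect matching; the lower bound and the edge count are routine. (This is the graph and argument of Harary \cite{Har62}, which we may invoke.)
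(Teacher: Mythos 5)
Your proposal is correct and matches the paper's treatment: the paper gives no proof of this result, noting only the minimum-degree argument for the lower bound in the preceding text and citing Harary \cite{Har62} for the extremal construction. You supply the same minimum-degree bound, the standard Harary graph with the correct edge counts in all parity cases, and (like the paper) ultimately invoke \cite{Har62} for the full $k$-connectivity verification, so nothing further is needed.
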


Since\, we\, will\, mainly\, be\, studying\, $2$-connected\, graphs,\, we\, state\, the\, well-known\, \emph{ear decomposition theorem}, which gives a characterisation for the structure of $2$-connected graphs. See for example \cite{Die18} (Ch.~3, Proposition 3.1.1).

\begin{thm}[Ear\, decomposition]\label{EDthm}
A\, graph\, $G$\, is\, $2$-connected\, if\, and\, only\, if\, $G$\, can\, be constructed from a cycle by successively adding ears to graphs that are already constructed.
\end{thm}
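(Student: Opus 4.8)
The plan is to prove both implications, with the ``if'' direction handled by a short induction and the ``only if'' direction by a maximality argument.

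For the backward implication, I would induct on the number of ears used in the construction. A cycle has at least three vertices and no cut-vertex, hence is $2$-connected. For the inductive step, suppose $G'$ is $2$-connected and $G=G'+P$, where $P=x_0x_1\cdots x_\ell$ is an ear of $G'$, so that $x_0,x_\ell\in V(G')$ and $x_1,\dots,x_{\ell-1}\notin V(G')$. It suffices to show $G-z$ is connected for every $z\in V(G)$, since $G$ inherits connectedness from $G'$ and $|V(G)|\ge|V(G')|\ge 3$. If $z\in\{x_1,\dots,x_{\ell-1}\}$, then $G-z$ is the connected graph $G'$ together with the two subpaths of $P-z$, each meeting $V(G')$ (at $x_0$ or at $x_\ell$), so $G-z$ is connected. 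If $z\in V(G')$, then $G'-z$ is connected because $G'$ is $2$-connected, and $P$ has at most one of its two end-vertices equal to $z$; deleting that end-vertex (if any) leaves a path still attached to $G'-z$, so again $G-z$ is connected. Hence $G$ is $2$-connected.

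For the forward implication, let $G$ be $2$-connected. Since $G$ has minimum degree at least $2$, it contains a cycle. Among all subgraphs of $G$ obtainable from a cycle by successively adding ears, I would pick one, say $H$, with the maximum number of edges; this exists as $G$ is finite. The aim is to prove $H=G$. First I would show $V(H)=V(G)$. If not, then since $G$ is connected there is an edge $xy\in E(G)$ with $x\in V(H)$ and $y\notin V(H)$. As $G$ is $2$-connected, $G-x$ is connected, so there is a path in $G-x$ from $y$ to $V(H)$; truncating it at the first vertex of $V(H)$ it reaches gives a path $Q$ from $y$ to some $w\in V(H)\setminus\{x\}$ whose internal vertices all lie outside $V(H)$. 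Then the path $x\,y\,Qw$ has both end-vertices in $V(H)$, all internal vertices outside $V(H)$, and all its edges outside $E(H)$ (each such edge has an end-vertex outside $V(H)$), so it is an ear of $H$; adjoining it contradicts the maximality of $H$. Thus $V(H)=V(G)$. Finally, if some edge $uv\in E(G)\setminus E(H)$ had both end-vertices in $V(H)$, then $uv$ is itself a $1$-ear of $H$, and adjoining it again contradicts maximality. Hence $E(H)=E(G)$ and $H=G$.

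The only delicate point is the extraction of the next ear in the forward direction: one must invoke $2$-connectedness precisely so that the path leaving $H$ is guaranteed to return to $H$ at a vertex \emph{distinct} from where it left, which is exactly what makes the truncated path a legitimate ear. Everything else—verifying that the constructed path satisfies the definition of an ear and that the augmented graph stays inside $G$—is routine.
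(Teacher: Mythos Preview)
Your proof is correct. Note, however, that the paper does not actually give its own proof of this theorem: it is quoted as a standard structural result and attributed to Diestel's textbook (Ch.~3, Proposition~3.1.1), so there is no in-paper argument to compare against. Your argument is essentially the classical textbook proof---induction on the number of ears for the backward direction, and a maximality argument (grow an ear-constructible subgraph $H$ until $V(H)=V(G)$ and then $E(H)=E(G)$) for the forward direction---and all the verifications (that the concatenated path $xyQw$ really is a path with endpoints in $V(H)$, interior outside $V(H)$, and edges outside $E(H)$) go through as you describe.
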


\begin{rmk} \textup{It is easy to see from Theorem \ref{EDthm} that, if $G$ is a $2$-connected graph on $n$ vertices and $q\ge 0$, then $G$ is constructed by successively adding $q$ ears, starting with an initial cycle, if and only if $|E(G)|=n+q$.}
\end{rmk}

We also have the following result from the book of Bollob\'as \cite{Bol78} (Ch.~IV, Theorem 2.8).

\begin{thm}\label{BBthm}\textup{\cite{Bol78}}
Let $m\ge 2$, and $G$ be a $2$-connected graph on $n$ vertices with diameter at most $2m$. Then
\[
|E(G)|\ge \Big(1+\frac{1}{2m-1}\Big)n-4(4m-2)^{m-1}.
\]
\end{thm}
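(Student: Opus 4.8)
The plan is to pass to the topological reduction of $G$ (suppress all degree-$2$ vertices) and to bound separately the suppressed "threads" and the "core" to which they are attached. First dispose of the degenerate case: if $G$ has no vertex of degree $\ge 3$ then $G$ is a cycle, $\diam(G)=\lfloor n/2\rfloor\le 2m$ forces $n\le 4m+1$, and the asserted inequality holds because its right-hand side is then negative (one checks $4(4m-2)^{m-1}\ge\frac{4m+1}{2m-1}$ for $m\ge 2$). So assume $G$ has a vertex of degree $\ge 3$; let $B$ be the set of all such vertices, call the remaining vertices \emph{thin}, and let the \emph{segments} of $G$ be the maximal paths all of whose interior vertices are thin (a segment may be a loop at one vertex of $B$). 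If there are $e$ segments, of lengths $\ell_1,\dots,\ell_e\ge 1$, then each thin vertex is interior to exactly one segment and each edge lies on exactly one segment, so
\[
|E(G)|=\sum_i\ell_i,\qquad n=|B|+\sum_i(\ell_i-1),\qquad\text{hence}\qquad q:=|E(G)|-n=e-|B|.
\]
Thus the statement is equivalent to $n\le (2m-1)q+2(4m-2)^m$: a $2$-connected graph of diameter $\le 2m$ whose edge-surplus $q$ (equivalently, by Theorem~\ref{EDthm} and the following Remark, the number of ears in an ear decomposition) is $q$ has at most $(2m-1)q+2(4m-2)^m$ vertices.

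Next I would prove a metric lemma bounding long segments. An interior vertex $x$ of a segment $P_i$, lying at distances $p\le p'$ along $P_i$ from the two ends of $P_i$, satisfies $d_G(x,y)\ge p$ for every vertex $y$ not interior to $P_i$, since any path out of the interior of $P_i$ must pass through an end of $P_i$. Taking $x$ to be the midpoint of $P_i$ and $y$ an end of $P_i$ gives $\lfloor\ell_i/2\rfloor\le 2m$, so every segment has length $\le 4m+1$. Taking $x,y$ to be the midpoints of two distinct segments $P_i,P_j$ — a geodesic between them leaves the interior of $P_i$ and enters that of $P_j$ through vertices of $B$ — gives $\lfloor\ell_i/2\rfloor+\lfloor\ell_j/2\rfloor\le 2m$, and, sharpening this, shows the $B$-ends of $P_i$ and $P_j$ are joined off the two interiors by a path of length $\le 2m-\lfloor\ell_i/2\rfloor-\lfloor\ell_j/2\rfloor$. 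Calling $P_i$ \emph{long} when $\ell_i\ge 2m$: any two long segments must share a $B$-end, so by a sunflower argument the long segments either all pass through a single vertex of $B$ or number at most three; at most one segment has length $\ge 2m+2$; and a pair of vertices of $B$ joined by several almost-extremal segments is forced to be close via the rest of the graph, which limits such configurations. Consequently every non-long segment has $\ell_i\le 2m-1$, and the total excess $E:=\sum_i\max(0,\ell_i-2m)$ is bounded by a polynomial in $m$ once the number of long segments has been controlled as above.

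Now combine this with a bound on the core. Since $\sum_{v\in B}(\deg_G(v)-2)\le 2q$, every vertex of $G$ has degree $\le 2q+2$; in particular $|B|\le 2q$. Deleting from $G$ the interiors of all long segments yields a graph $G^\ast$ (the "core"), and since each deleted segment reached distance about $m$ out of $G^\ast$ while $\diam(G)\le 2m$, the core is confined to a ball of radius roughly $m-1$ about a suitable vertex and has maximum degree $\le 2q+2$. If $q$ is large, then $n=|B|+\sum_i(\ell_i-1)\le |B|+(2m-1)e+E\le (2m-1)e+O(q)$ together with $e=q+|B|\le 3q$ already gives $n\le (2m-1)q+2(4m-2)^m$; otherwise $q$ is small, so $G^\ast$ has bounded maximum degree, and a Moore-type estimate $1+(4m-2)+(4m-2)(4m-3)+\cdots\le 4(4m-2)^{m-1}$ bounds the number of vertices of $G^\ast$, hence $|B|$, by $4(4m-2)^{m-1}$. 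Then $n=|B|+\sum_i(\ell_i-1)\le |B|+(2m-1)e+E=(2m-1)(e-|B|)+2m|B|+E$, and $2m|B|+E\le 2m\cdot 4(4m-2)^{m-1}+(\text{poly in }m)\le 2(4m-2)^m$, using $2(4m-2)^m=(2m-1)\cdot 4(4m-2)^{m-1}$; this is exactly $n\le (2m-1)q+2(4m-2)^m$.

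The main obstacle I foresee is the core step: making precise why deleting the long segments leaves a graph of radius essentially $m-1$ (rather than $2m$), carrying out the Moore-type count with base $4m-2$ and exponent $m-1$ so as to land on the stated constant $4(4m-2)^{m-1}$, and — throughout — organising the estimates so that the linear term comes out with denominator $2m-1$: the crude facts $\ell_i\le 4m+1$ and $e\ge\frac32|B|$ alone give only denominator $2m$ or worse, so the precise near-parallel structure of the long segments (that they are essentially parallel segments of length exactly $2m$, as in the extremal configuration of many internally disjoint $a$–$b$ paths of length $2m$) is what must be exploited. Handling the small cases first (where the additive constant already dominates) should streamline the rest.
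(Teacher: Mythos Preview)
The paper does not prove Theorem~\ref{BBthm}; it is quoted from Bollob\'as's book \cite{Bol78} (Ch.~IV, Theorem~2.8) and used as a black box in the proof of Theorem~\ref{t2(n,r)thm}(b). So there is no ``paper's own proof'' to compare against.

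On the merits of your plan: the reformulation $n\le(2m-1)q+2(4m-2)^m$ with $q=|E(G)|-n=e-|B|$ is correct, and the segment decomposition together with the sunflower observation on long segments is a sound opening. But your case split on $q$ collapses. In the ``large $q$'' branch you deduce $n\le |B|+(2m-1)e+E\le(2m-1)e+O(q)$ and then invoke $e\le 3q$; this only gives $n\le(6m-3)q+O(q)$, a factor~$3$ away from the target $(2m-1)q+C$. More fundamentally, the strategy of bounding $|B|$ by a function of $m$ alone and then inserting that into $n\le(2m-1)q+2m|B|+E$ cannot succeed: a $3$-regular graph of diameter at most $2m$ has $|B|=n$ and $q=n/2$, so $|B|=2q$ can be arbitrarily large. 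What rescues such graphs is that every segment has length $1\ll 2m-1$, so the crude bound $\ell_i-1\le 2m-1$ discards an enormous amount of slack; your chain of inequalities throws this away at the first step. (Indeed, even your ``small $q$'' endgame does not close: with $|B|\le 4(4m-2)^{m-1}$ you get $2m|B|\le 8m(4m-2)^{m-1}=2(4m-2)^m+4(4m-2)^{m-1}$, already overshooting the allowed constant before $E$ is added.) The actual argument in \cite{Bol78} keeps the trade-off between segment lengths and the number of branch vertices coupled, rather than separating them into an additive bound; your acknowledged obstacle about ``organising the estimates so that the linear term comes out with denominator $2m-1$'' is precisely this missing idea, and it is structural, not just a matter of constants.
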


Now, we consider some results about the rainbow $k$-connection number. Note that for any $k$-connected graph $G$, we have $rc_k(G)\ge 2$. When $n$ is sufficiently large, Chartrand et al.~\cite{CJMZ09} showed that equality holds for the complete graph $K_n$.

\begin{thm}\label{CJMZthm}\emph{\cite{CJMZ09}}
\indent\\[-3ex]
\begin{enumerate}
\item[(a)] $rc_2(K_n)=2$ for $n\ge 4$.
\item[(b)] $rc_k(K_n)=2$ for $k\ge 2$ and $n\ge (k+1)^2$.
\end{enumerate}
\end{thm}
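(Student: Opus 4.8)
The plan is to prove both parts by exhibiting an explicit $2$-colouring (for part (a)) and an explicit $2$-colouring (for part (b)) of the edges of $K_n$, and then checking that any two vertices are joined by $k$ internally vertex-disjoint rainbow paths. Since a rainbow path using at most $2$ colours has length at most $2$, we are forced to connect each pair $x,y$ by paths of length $1$ or $2$; the edge $xy$ itself is always a rainbow path, so the real task is to produce $k-1$ further internally disjoint rainbow $xy$-paths of length $2$. Such a path has the form $xzy$ and is rainbow precisely when $c(xz)\ne c(zy)$. So the combinatorial core is: for every pair $x,y$, find $k-1$ common neighbours $z$ (all distinct, and distinct from $x,y$) with $c(xz)\ne c(zy)$; these length-$2$ paths are automatically internally disjoint from each other and from the edge $xy$.

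For part (a), with $k=2$ and $n\ge 4$: I would fix two vertices $u,v$, colour the edge $uv$ and all edges incident to $u$ with colour $1$ except recolour... more cleanly, take a Hamiltonian path $v_1v_2\cdots v_n$ coloured alternately (or just take any spanning structure) — actually the simplest choice is to colour the edges of one fixed Hamiltonian cycle $C=v_1v_2\cdots v_nv_1$ with colour $1$ and all remaining (chord) edges with colour $2$, assuming $n\ge 5$; then for any $x,y$ one rainbow path is $xy$, and a second length-$2$ rainbow path through a suitable $z$ exists because among the $n-2\ge 3$ candidate midpoints $z$, not all three edge-pairs $(xz,zy)$ can be monochromatic (a short case check on whether $xy$ is a cycle-edge or a chord). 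The cases $n=4$ can be handled by hand. One should double check the construction works for all $x,y$; this is the only real verification in part (a), and it is routine.

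For part (b), with $k\ge 2$ and $n\ge (k+1)^2$, the key point is that $n$ is large relative to $k$, so there is plenty of room. I would colour $K_n$ so that colour class $1$ is a spanning subgraph that is "locally sparse" and colour class $2$ is its complement; concretely, partition $V(K_n)$ into $k+1$ parts $V_0,\dots,V_k$ each of size at least $k+1$, colour all edges inside a part with colour $1$ and all edges between distinct parts with colour $2$ (a complete multipartite structure for colour $2$). Then for a pair $x,y$: if $x,y$ lie in different parts, every vertex $z$ in a third part gives a rainbow path $xzy$ (colours $2,2$ — not rainbow!). So instead I would want colour $1$ between the parts — better: colour edges between $V_i$ and $V_j$ with colour $1$ if... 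The honest approach is the one Chartrand et al.\ use: take colour $1$ to be a perfect matching or a sparse graph and argue by counting. For each ordered pair $(x,y)$, a midpoint $z\ne x,y$ fails only if $c(xz)=c(zy)$; the number of bad $z$ with $c(xz)=c(zy)=1$ is at most $(\deg_1(x)-1)$ and similarly at most $(\deg_1(y)-1)$ on one side, symmetrically for colour $2$, so choosing the colouring to make both colour-degrees around $n/2$ does not obviously suffice — we need the bad set small. The clean fix: make colour $1$ a graph of maximum degree $k$ (possible since $n$ is large, e.g.\ a union of $k$ perfect matchings or a $k$-regular graph); then for any $z$, at most one of $xz,zy$... no. Let me instead bound: $z$ is bad iff $xz,zy$ same colour. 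The number of $z$ with $xz$ colour $1$ is $\le k$, likewise for $zy$; a $z$ bad in colour $1$ needs both, so there are $\le k$ such; the number bad in colour $2$ is $\le$ (number with $xz$ colour $2$) but that's $\ge n-1-k$, too many. So colour $1$ must be the \emph{dense} one: take colour $2$ to be $k$-regular (degree $k$), colour $1$ everything else. Then bad-in-colour-$2$ midpoints $\le k$, and bad-in-colour-$1$ midpoints: $z$ with $xz,zy$ both colour $1$ — the complement of "$z$ has $xz$ colour $2$ or $zy$ colour $2$", which excludes $\le 2k$ vertices, so there are $\ge n-2-2k$ good-or-bad... this still doesn't immediately separate. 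The correct statement is: $z$ is \emph{good} iff $c(xz)\ne c(zy)$, i.e.\ exactly one of $xz,zy$ is colour $2$; the set of such $z$ has size at least $|\{z:c(xz)=2\}| - |\{z:c(zy)=2\}|$ in absolute value plus... Rather than belabour this, I expect the proof splits on whether $xy$ itself has colour $1$ or $2$ and uses that with colour $2$ being $k$-regular, $x$ has exactly $k$ colour-$2$ neighbours, giving a controlled structure; the hypothesis $n\ge(k+1)^2$ is exactly what is needed to always extract $k-1$ good internally-disjoint midpoints. \textbf{The main obstacle} is precisely this: designing the $2$-colouring of $K_n$ (the right near-regular colour class) and then, for each of the several cases of how $x$ and $y$ sit relative to that colour class, verifying that at least $k-1$ valid distinct midpoints survive — a counting argument where the bound $n\ge(k+1)^2$ must be used tightly.
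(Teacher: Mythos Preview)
The paper does not actually prove this theorem; it is quoted from \cite{CJMZ09}. The only argument the paper supplies is a short justification of part (a): for $n\ge 5$ colour a Hamilton cycle of $K_n$ red and all remaining edges blue, and for $K_4$ use the $2$-colouring in which each colour class is a path of length three. Your plan for (a) is exactly this construction, so on part (a) you and the paper agree.

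For part (b) there is nothing in the paper to compare against, but your proposal has a genuine gap. You correctly isolate the combinatorial core --- for each pair $x,y$ one needs $k-1$ distinct vertices $z\notin\{x,y\}$ with $c(xz)\neq c(zy)$ --- but none of the colourings you try actually delivers this. The multipartite idea fails immediately (as you note, all length-$2$ paths through a third part are monochromatic). Making one colour class $k$-regular also fails for the reason you uncover but do not resolve: if colour $2$ is $k$-regular, the ``good'' midpoints are exactly the symmetric difference $N_2(x)\triangle N_2(y)$, and this set can be empty (take $x,y$ with identical colour-$2$ neighbourhoods), so no lower bound of $k-1$ follows regardless of how large $n$ is. Your closing sentence (``the main obstacle is precisely this'') concedes that you have not found the right $2$-colouring; without it, the hypothesis $n\ge (k+1)^2$ is never used and the argument cannot be completed. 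A working proof needs a specific structured colouring for which one can \emph{prove} that every pair has at least $k-1$ bichromatic midpoints --- the constructions you sketch do not have this property, so a different design is required.
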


To see (a), we have a rainbow $2$-connected $2$-colouring of $K_n$ as follows. For $n\ge 5$, we colour a Hamilton cycle of $K_n$ with red, and all remaining edges with blue. For $K_4$, we take the $2$-colouring where the colours red and blue both induce a path of length three.

Next, we have the following result of Li and Liu \cite{LL13} about the rainbow $2$-connection number of $2$-connected graphs. This result is analogous to the fact that $rc(G)\le n-1$ if $G$ is a connected graph on $n$ vertices.

\begin{thm}\label{LLthm}\textup{\cite{LL13}}
Let $G$ be a $2$-connected graph on $n\ge 3$ vertices. Then $rc_2(G) \le n$, with equality if and only if $G$ is the cycle on $n$ vertices.
\end{thm}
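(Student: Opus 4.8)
The plan is to establish the bound $rc_2(G)\le n$ by induction on the number $q$ of ears in an ear decomposition of $G$ (Theorem~\ref{EDthm}), and then to treat the characterisation of equality. For $q=0$ we have $G=C_n$, and colouring the $n$ edges with $n$ distinct colours suffices: the only two internally disjoint paths joining any two vertices are the two arcs of the cycle, and each is rainbow. For the inductive step write $G=G'\cup P$, where $P=x_0x_1\cdots x_\ell$ is the last ear (with $\ell-1\ge 0$ new internal vertices) and $G'$ is a $2$-connected graph with $q-1$ ears and $n-(\ell-1)$ vertices, which by induction has a rainbow $2$-connected colouring $c'$ using at most $n-(\ell-1)$ colours. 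I would extend $c'$ to $G$ by colouring the $\ell$ new edges with $\ell-1$ new colours together with one reused (old) colour $\gamma$, placed on a suitable end-edge of $P$, so that every subpath of $P$ that is traversed in a routing is rainbow.

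The core of the argument is the routing. A pair of vertices both lying in $G'$ retains its two disjoint rainbow paths given by $c'$. If $u=x_j$ is an internal vertex of $P$, then, since the internal vertices of $P$ have degree $2$ in $G$, any two internally disjoint paths from $u$ must leave $P$ through $x_0$ on one side and through $x_\ell$ on the other; so what is required is, for the target vertex $v$, a pair of internally disjoint rainbow paths in $G'$ from $x_0$ and from $x_\ell$ to $v$ (one of them reduced to a single vertex when $v\in\{x_0,x_\ell\}$), with the path running through the $\gamma$-coloured edge arranged to avoid the colour $\gamma$ in $G'$. The case where $u$ and $v$ are \emph{both} internal to $P$ is the most delicate: there the ``long'' path must detour through $G'$ between $x_0$ and $x_\ell$ without using the reused colour twice. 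To make all of this go through I would strengthen the inductive hypothesis so that $c'$ is not merely rainbow $2$-connected but guarantees such families of internally disjoint rainbow paths between three prescribed vertices, with enough freedom in the colour $\gamma$; verifying that this stronger property survives the addition of one ear — and in particular choosing which old colour to reuse and on which edge of $P$ — is the main obstacle.

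For the equality, first take $G=C_n$ and show $rc_2(C_n)\ge n$: given any two distinct edges $e,f$ of $C_n$, one can pick two vertices $u,v$ so that on the (unique) pair of disjoint $u$--$v$ paths, namely the two arcs, the edges $e$ and $f$ lie on the \emph{same} arc; since that arc must be rainbow, $c(e)\ne c(f)$, so all $n$ colours are distinct and $rc_2(C_n)=n$. Conversely, suppose $G$ is $2$-connected on $n$ vertices with $G\ne C_n$; then $G$ has at least one ear, and I would sharpen the construction above to obtain $rc_2(G)\le n-1$. If some ear of a chosen ear decomposition has an internal vertex, reuse a colour of the initial cycle $C_0$ when colouring the first such ear, so that only $n-1$ colours appear in total and the routing analysis is unchanged (the reused colour is still an old colour). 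If instead every ear is a single chord, so $C_0=C_n$, a chord supplies a short rainbow detour that lets one recolour the cycle with $n-1$ colours while keeping the whole colouring rainbow $2$-connected. In either case $rc_2(G)\le n-1$, which completes the characterisation.
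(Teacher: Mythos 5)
This theorem is not proved in the paper at all: it is quoted from Li and Liu \cite{LL13}, so there is no in-paper proof to match your argument against. Judged on its own terms, your proposal is a plausible plan but has a genuine, and indeed self-acknowledged, gap exactly where the difficulty of the theorem lies. The induction on an ear decomposition only closes if the colouring $c'$ of $G'$ does more than witness $rc_2(G')\le n-(\ell-1)$: for a vertex $u$ internal to the new ear $P$ and a vertex $v\in V(G')\setminus\{x_0,x_\ell\}$, you need two \emph{vertex-disjoint} rainbow paths in $G'$, one from $x_0$ to $v$ and one from $x_\ell$ to $v$ (a rainbow ``fan'' from $v$ to $\{x_0,x_\ell\}$), and for $u,v$ both internal to $P$ you need a rainbow $x_0$--$x_\ell$ path in $G'$ avoiding the reused colour $\gamma$. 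Plain rainbow $2$-connectivity of $c'$ guarantees neither: the two disjoint rainbow $v$--$x_0$ paths and the two disjoint rainbow $v$--$x_\ell$ paths it provides need not combine into a disjoint pair with the required endpoints, and both rainbow $x_0$--$x_\ell$ paths may use the colour $\gamma$. You propose to repair this by strengthening the inductive hypothesis, but you neither formulate the stronger property precisely nor verify that it is preserved when an ear is added; as you yourself note, this is ``the main obstacle,'' and it is the substance of the theorem rather than a routine detail.

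The equality part is in a similar state. The lower bound $rc_2(C_n)\ge n$ is argued correctly (the two arcs are the only disjoint paths, and any two edges can be forced onto one arc). But the converse, that $G\ne C_n$ implies $rc_2(G)\le n-1$, again rests on the unproved strengthened induction, and in the remaining case (cycle plus chords) the claim that ``a chord supplies a short rainbow detour that lets one recolour the cycle with $n-1$ colours'' is asserted without any construction or verification; note that by Theorem \ref{t2(n,r)thm}(d) such $\Theta$-graphs have $rc_2$ exactly $n-1$, so there is no slack here and the colouring must be exhibited carefully. As it stands, the proposal is an outline of a strategy, not a proof; completing it would require essentially the work carried out in \cite{LL13}.
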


Finally, we prove the following useful lemma.

\begin{lem}\label{lm1}
Let $G$ be a $2$-connected graph, and $P$ be a path in $G$ of length at least three such that, all internal vertices of $P$ have degree two in $G$. Then for any rainbow $2$-connected colouring of $G$, the edges of $P$ must be rainbow coloured.
\end{lem}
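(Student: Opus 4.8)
The plan is to argue by contradiction: suppose $c$ is a rainbow $2$-connected colouring of $G$ in which $P = x_0 x_1 \cdots x_\ell$ (with $\ell \ge 3$ and $x_1,\dots,x_{\ell-1}$ of degree two in $G$) is \emph{not} rainbow. Then two edges of $P$ receive the same colour, say $e$ and $f$, and at least one of these edges is internal to $P$, i.e.\ incident to a degree-two vertex of $P$; let $x_i$ be such an internal endpoint, so $1 \le i \le \ell-1$. The key observation is that because $x_i$ has degree two, \emph{every} path in $G$ through $x_i$ must use both edges $x_{i-1}x_i$ and $x_i x_{i+1}$, hence must traverse the whole of $P$ from $x_0$ to $x_\ell$ (walking along $P$, once we enter a degree-two vertex we are forced to continue to the next vertex of $P$, and this propagates until we reach $x_0$ or $x_\ell$). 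So any path containing $x_i$ contains all of $P$ as a subpath.

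Now I would apply the rainbow $2$-connectedness to the pair $\{x_0, x_\ell\}$: there are two internally vertex-disjoint rainbow paths $Q_1, Q_2$ from $x_0$ to $x_\ell$. Since $x_i$ has degree two and lies strictly between $x_0$ and $x_\ell$, and since $Q_1$ and $Q_2$ are internally disjoint, $x_i$ can lie on at most one of them — say $x_i \notin Q_2$. Then $Q_2$ is a rainbow $x_0$--$x_\ell$ path avoiding $x_i$; but by the forced-traversal observation, such a path would have to go around $G$ without using the subpath $P$ at all (it cannot enter any internal vertex $x_j$ of $P$ either, by the same propagation argument applied to $x_j$). That is fine in principle — $G$ may well have another $x_0$--$x_\ell$ route. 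The contradiction instead comes from $Q_1$: at least one of $Q_1, Q_2$ must avoid $x_i$, but I should instead look at a path that \emph{does} contain an internal vertex. Concretely, pick an internal vertex $x_j$ incident to one of the monochromatic edges $e,f$ (possible since at least one of $e,f$ is internal). Apply rainbow $2$-connectedness to the pair $\{x_{j-1}, x_{j+1}\}$ if $1 \le j \le \ell-1$: the two disjoint rainbow $x_{j-1}$--$x_{j+1}$ paths — one is the edge-pair $x_{j-1}x_j x_{j+1}$ or a segment of $P$, the other a path $R$ disjoint from it. Hmm — I realise the cleanest route is: since $x_j$ ($1\le j \le \ell-1$) has degree two, the \emph{only} path between $x_{j-1}$ and $x_{j+1}$ through $x_j$ is the two-edge path, and $G$ being $2$-connected also gives a second $x_{j-1}$--$x_{j+1}$ path avoiding $x_j$; that second path together with the walk along $P$ from $x_{j-1}$ out to $x_0$ and from $x_{j+1}$ out to $x_\ell$ shows $x_0$ and $x_\ell$ have exactly two disjoint paths, one of which is $P$ itself. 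Therefore in \emph{any} rainbow $2$-connected colouring, $P$ must be one of the two disjoint rainbow $x_0$--$x_\ell$ paths — forcing $P$ to be rainbow, the desired conclusion.

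So the skeleton of the write-up is: (1) record that every internal vertex of $P$ has degree two, so any path meeting an internal vertex of $P$ contains $P$ as a subpath (propagation lemma); (2) deduce that $G$ has exactly one $x_0$--$x_\ell$ path meeting $V(P)\setminus\{x_0,x_\ell\}$, namely $P$; (3) invoke $2$-connectedness / Menger to get two disjoint $x_0$--$x_\ell$ paths, and note one of them must be $P$ (since the other avoids all internal vertices of $P$, and $P$ is the unique path through them); (4) a rainbow $2$-connected colouring provides two \emph{rainbow} disjoint $x_0$--$x_\ell$ paths, and by the uniqueness just established $P$ must be one of them, so $P$ is rainbow.

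The main obstacle I anticipate is step (3): carefully justifying that among the two disjoint $x_0$--$x_\ell$ paths in a rainbow decomposition, it really is $P$ (and not some other path through a subset of $V(P)$) that appears — this needs the propagation argument to be stated crisply, namely that a path cannot "enter and leave" the interior of $P$ without traversing all of it, because each interior vertex has degree exactly two. Once that is nailed down, the rest is immediate. I would state the propagation fact as a one-line sub-claim and prove it by tracing the path: if a path $R$ contains $x_j$ with $1 \le j \le \ell-1$, then since $\deg_G(x_j)=2$ its two $R$-neighbours are $x_{j-1}$ and $x_{j+1}$, and inducting outward on $j$ forces $R$ to contain $x_0 P x_\ell = P$.
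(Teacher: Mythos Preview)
Your skeleton has a genuine gap at step~(3). You claim that among any two internally disjoint $x_0$--$x_\ell$ paths, one of them must be $P$. This is false: nothing prevents \emph{both} paths from avoiding the interior of $P$. For a concrete counterexample, take a $\Theta$-graph consisting of three internally disjoint $x_0$--$x_\ell$ paths $P$, $P'$, $P''$, where $P$ has length $\ell\ge 3$. A rainbow $2$-connected colouring might well exhibit $P'$ and $P''$ as the two disjoint rainbow $x_0$--$x_\ell$ paths, telling you nothing about whether $P$ is rainbow. Your propagation claim only shows that the unique $x_0$--$x_\ell$ path \emph{through the interior of $P$} is $P$; it does not force any particular pair of disjoint paths to include one through that interior.

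The fix is the idea you briefly touched on and then abandoned: apply rainbow $2$-connectedness not to $\{x_0,x_\ell\}$ but to a pair with at least one internal vertex, chosen so that one of the two forced routes uses both repeated-colour edges. Concretely, if $x_{i-1}x_i$ and $x_{j-1}x_j$ share a colour with $i<j$, take $u=x_i$, $v=x_{j-1}$ when $i<j-1$, and $u=x_{i-1}$, $v=x_j$ when $i=j-1$. Since $\ell\ge 3$, at least one of $u,v$ has degree~$2$, so your propagation argument \emph{does} apply: the two disjoint $u$--$v$ paths are forced to be the two arcs of $P$ (one of them extended through $G\setminus P$), and one of those arcs contains both monochromatic edges, hence cannot be rainbow. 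That is exactly the paper's argument.
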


\begin{proof}
Suppose that there exists a rainbow $2$-connected colouring of $G$, and a path $P$ in $G$ as described in the lemma which is not rainbow coloured. We have $P=x_0x_1\cdots x_\ell$ for some $\ell\ge 3$, where $\deg_G(x_i)=2$ for all $1\le i\le\ell-1$. There exist two edges $x_{i-1}x_i$ and $x_{j-1}x_j$ with the same colour, for some $1\le i<j\le\ell$. If $i<j-1$, let $u=x_i$ and $v=x_{j-1}$. If $i=j-1$, let $u=x_{i-1}$ and $v=x_j$. Note that since $\ell\ge 3$, at least one of $u$ and $v$ is an internal vertex of $P$. We see that there do not exist two disjoint rainbow paths connecting $u$ and $v$, a contradiction.
\end{proof}


\section{The function $t_2(n,r)$}\label{t2(n,r)sect}

Let $n>k\ge 2$, and $G$ be a $k$-connected graph on $n$ vertices. By Theorem \ref{CJMZthm}(b), we see that for sufficiently large $n$ ($n\ge (k+1)^2$ will do), $t_k(n,2)$ is well-defined. Together with Theorem \ref{Harthm},  we have the following analogue to (\ref{t1chain})
\[
{n\choose 2} \ge t_k(n,2)\ge t_k(n,3)\ge\cdots \ge t_k\Big(n,\Big\lceil\frac{kn}{2}\Big\rceil\Big) = t_k\Big(n,\Big\lceil\frac{kn}{2}\Big\rceil+1\Big)= \cdots=\Big\lceil\frac{kn}{2}\Big\rceil.
\]

In this section, we study the function $t_2(n,r)$. For $n\ge 3$, the cycle $C_n$ is the unique $2$-connected graph on $n$ vertices with the minimum number of edges, which is $n$. Note that $rc_2(C_n)=n$. Thus for $n\ge 4$, Theorem \ref{CJMZthm}(a) implies
\begin{equation}\label{t2chain}
{n\choose 2} \ge t_2(n,2)\ge t_2(n,3)\ge\cdots \ge t_2(n,n)=t_2(n,n+1)=\cdots=n.
\end{equation}
It suffices to consider $t_2(n,r)$ for $2\le r\le n-1$. The following theorem is our main result.

\begin{thm}\label{t2(n,r)thm}
\indent\\[-3ex]
\begin{enumerate}
\item[(a)] $t_2(n,2)=(1+o(1))n\log_2 n$.
\item[(b)]
\[
t_2(n,r)\ge
\left\{
\begin{array}{ll}
\displaystyle\frac{4}{3}n-24 & \textup{\emph{if} }r\in\{3,4\}\textup{\emph{ and }}n\ge 18\textup{\emph{,}}\\[2ex]
\displaystyle\frac{6}{5}n-\frac{1}{5}r(r-1) & \textup{\emph{if} }5\le r\le n-1\textup{\emph{ and }}\displaystyle n>\frac{1}{6}r(r-1).
\end{array}
\right.
\]
\item[(c)]
\[
t_2(n,r)\le
\left\{
\begin{array}{ll}
\displaystyle\frac{5}{2}n-5 & \textup{\emph{if} }r\in\{3,4\}\textup{\emph{ and }}n\ge r+1\textup{\emph{,}}\\[2ex]
\displaystyle\frac{7n-19}{3} & \textup{\emph{if} }r=5\textup{\emph{ and }}n\ge 7\textup{\emph{,}}\\[2ex]
  2n-r+2 &\textup{\emph{if} }6\le r\le n-3.
\end{array}
\right.
\]
\item[(d)] $t_2(n,n-2)=n+2$ for $n\ge 6$, and $t_2(n,n-1)=n+1$ for $n\ge 4$.
\end{enumerate}
\end{thm}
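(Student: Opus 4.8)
The plan is to handle the two cases separately, in each case proving a matching lower bound and upper bound. For the upper bounds, I would exhibit explicit $2$-connected graphs $G$ on $n$ vertices with few edges and verify that $rc_2(G)$ is small enough. For $t_2(n,n-1)\le n+1$: by the Remark after Theorem \ref{EDthm}, a graph with $n+1$ edges is a cycle with one ear attached, i.e.\ a ``theta graph''. Take $C_n$ on $x_0x_1\cdots x_{n-1}x_0$ and add an ear of length $2$ from $x_0$ to $x_2$ through a vertex already present — more carefully, since we must stay on $n$ vertices, take the theta graph $\Theta$ consisting of three internally disjoint paths between two fixed vertices $u,v$, of lengths roughly $2, \lceil(n-1)/2\rceil, \lfloor(n-1)/2\rfloor$ (so $|V(\Theta)|=n$, $|E(\Theta)|=n+1$). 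One then colours the edges and checks that every pair of vertices has two disjoint rainbow paths using at most $n-1$ colours; the point is that the longest of the three paths has length $<n-1$, and Lemma \ref{lm1} forces long degree-two paths to be rainbow anyway, so $n-1$ colours suffice but $n-2$ do not. For $t_2(n,n-2)\le n+2$: use a graph with $n+2$ edges — a cycle with two ears, chosen so that the longest ``bare'' path has length at most $n-2$ — and colour similarly.

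For the lower bounds, the key is Lemma \ref{lm1} together with the ear-decomposition structure. For $t_2(n,n-1)\ge n+1$: by (\ref{t2chain}) we already know $t_2(n,n)=n$ is achieved only by $C_n$ with $rc_2(C_n)=n>n-1$, so any $2$-connected graph with $rc_2(G)\le n-1$ has at least $n+1$ edges. This is immediate. For $t_2(n,n-2)\ge n+2$: we must show no $2$-connected graph $G$ on $n$ vertices with $n+1$ edges has $rc_2(G)\le n-2$. Such a $G$ is a theta graph with three $u$--$v$ paths of lengths $a\le b\le c$, $a+b+c=n+1$, $a\ge 1$. All internal vertices of each path have degree two. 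If $c\ge n-1$, then $a+b\le 2$, forcing $a=b=1$ (a multigraph, excluded) or $a=1,b=1$ — impossible in a simple graph unless $a=b=1$; so in the simple case the genuinely problematic configuration is when the long path is ``bare'' of length $\ge n-1$: by Lemma \ref{lm1} that path must be rainbow, using $\ge n-1$ colours, contradicting $rc_2(G)\le n-2$. I would then argue that the only simple theta graph on $n$ vertices with all three paths of length $\le n-2$ still fails: any rainbow $2$-connected colouring must rainbow-colour each of the two longest bare paths (Lemma \ref{lm1}), and a short counting/colour-overlap argument between the two long paths, which together with the third path must provide two disjoint rainbow $u$--$v$ paths and two disjoint rainbow paths between interior vertices of different branches, forces at least $n-1$ colours. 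This last step — pinning down exactly why $n-2$ colours are insufficient for every simple theta graph on $n$ vertices — is the main obstacle, and I expect it to require a careful case analysis on the three path lengths $(a,b,c)$, handling $a=1$ (so $G$ is $C_{b+c}$ with a chord) and $a\ge 2$ separately.

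I would organise the write-up as: first dispose of $t_2(n,n-1)$ (both bounds, each one line), then prove $t_2(n,n-2)\le n+2$ by an explicit construction and colouring, then prove $t_2(n,n-2)\ge n+2$ via the theta-graph analysis above. The $n\ge 6$ and $n\ge 4$ hypotheses enter only to guarantee the constructions are simple graphs and that Theorem \ref{CJMZthm}(a) / Theorem \ref{LLthm} apply; I would note where each is used. Throughout, Lemma \ref{lm1} is the workhorse for lower bounds and the ear-decomposition Remark identifies the candidate graphs with exactly $n+1$ or $n+2$ edges.
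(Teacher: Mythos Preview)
Your overall outline is sound: the lower bound $t_2(n,n-1)\ge n+1$ is immediate from $C_n$ being the unique $2$-connected graph on $n$ edges, and for $t_2(n,n-2)\ge n+2$ you correctly reduce to showing that every $\Theta$-graph on $n$ vertices has $rc_2\ge n-1$, with Lemma~\ref{lm1} as the workhorse. Two places, however, diverge from the paper and deserve sharpening.

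\emph{Upper bounds.} Your balanced theta (path lengths $2,\lceil(n-1)/2\rceil,\lfloor(n-1)/2\rfloor$) is not what the paper uses, and you have not actually exhibited a colouring --- invoking Lemma~\ref{lm1} here is backwards, since that lemma constrains colourings rather than produces them. The paper takes a very asymmetric construction: for $t_2(n,n-1)\le n+1$ it uses $C_{n-1}$ with one extra vertex joined to two adjacent cycle vertices (a theta with branch lengths $1,2,n-2$) and gives an explicit $(n-1)$-colouring; for $t_2(n,n-2)\le n+2$ it uses $C_{n-2}$ with two such $2$-ears attached at overlapping pairs of consecutive vertices, again with an explicit $(n-2)$-colouring. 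Your balanced construction may also work, but the asymmetric one makes verification short because almost every vertex lies on a single long rainbow path.

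\emph{Lower bound $t_2(n,n-2)\ge n+2$.} You propose to case-split on the branch lengths $(a,b,c)$. The paper instead organises the argument around \emph{where repeated colours can sit}. With $n+1$ edges and at most $n-2$ colours there must be at least three colour repetitions; the paper first proves (i)~if two edges of the same colour lie in different branches, one must be incident with $x$ and the other with $y$ at opposite ends, and (ii)~no colour is used three times. Each claim is established by exhibiting a ``bad'' pair of vertices whenever it fails. Together they force the three repeated colours into a rigid pattern among the edges incident with $x$ and $y$, which is then dispatched in two short cases ($q_3=2$ versus $q_3\ge 3$). This is cleaner than your $(a,b,c)$ split because it localises the problem immediately; if you pursue your route you will find yourself rediscovering (i) and (ii) inside each length case, so it is better to isolate them up front.

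Finally, your paragraph about $c\ge n-1$ is muddled: in a simple theta graph the branch lengths satisfy $a\ge 1$ and $b\ge 2$ (else a multi-edge), so $c\le n-2$ always and that sub-case does not arise.
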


\begin{proof}
(a) We have $t_2(n,2)\ge t_1(n,2)\ge n(\log_2 n -4\log_2 \log_2 n - 2)$ for sufficiently large $n$, where the first inequality follows from (\ref{tktleq}), and the second inequality was proved by Li et al.~\cite{LLSZ14}. Now we construct a graph $G$ on $n$ vertices, with a $2$-colouring $c$, as follows. Take the complete bipartite graph with classes $A$ and $B$, where $|A|=a\ge 4$ and $|B|=n-a$, such that $2^{a-2}+a-1\le n\le 2^{a-1}+a-1$. Add a complete graph on $A$. It is easy to obtain $|E(G)|=(1+o(1))n\log_2 n$. Now, assign to the vertices of $B$ distinct $(1,2)$-vectors of length $a$ with a positive even number of $2$s, such that the $a-1$ vectors $(2,2,1,\dots,1),(2,1,2,1,\dots,1),\dots,(2,1,\dots,1,2)$ are all present. Note that the assignment is possible, since $|B|=n-a\ge 2^{a-2}-1\ge a-1$, so that the $a-1$ aforementioned vectors can all be assigned; and $|B|=n-a\le 2^{a-1}-1$, whence there are a total of $2^{a-1}-1$ assignable vectors. Let $A=\{u_1,\dots,u_a\}$. For $u_i\in A$ and $v\in B$, let $c(u_iv)={\vec{v}}_i$, where $\vec{v}$ denotes the vector assigned to $v$, and $\vec{v}_i$ denotes the $i$th component of $\vec{v}$. Let $c(u_iu_j)=1$ for all $i,j$. Then $c$ is a rainbow $2$-connected colouring of $G$. Indeed, for two vertices of $G$, we may find two disjoint rainbow paths connecting them as follows.
\begin{itemize}
\item If $u_i,u_j\in A$, then since the $a-1$ aforementioned vectors are present, we may choose $v\in B$ such that ${\vec{v}}_i\neq {\vec{v}}_j$, and take the paths $u_iu_j$ and $u_ivu_j$.
\item If $v,w\in B$, then there exist $i,j$ such that ${\vec{v}}_i\neq {\vec{w}}_i$ and ${\vec{v}}_j\neq {\vec{w}}_j$, and we may take the paths $vu_iw$ and $vu_jw$.
\item  If $u_i\in A$ and $v\in B$, then we may choose $u_j\in A$ such that ${\vec{v}}_j=2$, and take the paths $u_iv$ and $u_iu_jv$.
\end{itemize}
Thus $rc_2(G)=2$, and $t_2(n,2)\le (1+o(1))n\log_2 n$.\\[1ex]
\indent(b) Let $r\ge 3$, and $G$ be a $2$-connected graph on $n$ vertices with $rc_2(G)\le r$. Note that since $\diam(G)\le rc_2(G)\le 2\lceil\frac{r}{2}\rceil$, setting $m=\lceil\frac{r}{2}\rceil$ in Theorem \ref{BBthm} gives $|E(G)|\ge \big(1+\frac{1}{2\lceil r/2\rceil-1}\big)n-c_r$ for some constant $c_r>0$. This implies 
\begin{equation}
t_2(n,r)\ge \Big(1+\frac{1}{2\lceil r/2\rceil-1}\Big)n-c_r.\label{t2diameq}
\end{equation}
For $r\ge 5$, we will see that the constant $1+\frac{1}{2\lceil r/2\rceil-1}$ in (\ref{t2diameq}) can be improved to $\frac{6}{5}$. For $r\in\{3,4\}$ and $n\ge 18$, setting $m=2$ in Theorem \ref{BBthm} gives $|E(G)|\ge \frac{4}{3}n-24$. Therefore, $t_2(n,r)\ge \frac{4}{3}n-24$.


Now, suppose that $5\le r\le n-1$ and $n>\frac{1}{6}r(r-1)$. Note that $rc_2(G)\le n-1$, so that $G\neq C_n$. We prove that $|E(G)|\ge \frac{6}{5}n-\frac{1}{5}r(r-1)$, which implies $t_2(n,r)\ge \frac{6}{5}n-\frac{1}{5}r(r-1)$. Let $V_2\subset V(G)$ be the subset defined by $V_2=\{v\in V(G):\deg_G(v)=2\}$. Note that $G[V_2]$ is a linear forest, i.e., $G[V_2]$ is a union of vertex-disjoint paths.
For an integer $i\ge 1$, let $\mathcal{P}_i$ be the family of path components in $G[V_2]$ such that $|V(P)|=i$ for every $P\in \mathcal{P}_i$. We have $V_2=\bigcup_{i\ge 1}\bigcup_{P\in \mathcal{P}_i}V(P)$.
By the definition of $\mathcal{P}_i$, together with the assumption that $G$ is $2$-connected, the following two properties hold:
\begin{enumerate}
\item[(i)] For any $P\in \mathcal{P}_i$ and $Q\in \mathcal{P}_j$ (possibly $i=j$), we have $E_G(V(P), V(Q))=\emptyset$.
\item[(ii)] For any $P=x_1\cdots x_i\in \mathcal{P}_i$, there exist two distinct vertices $x_0,x_{i+1}\in V(G)\setminus V_2$ such that $E_G(V(P), V(G)\setminus V(P))=\{x_1x_0, x_ix_{i+1}\}$, with $\deg_G(x_0)\ge 3$ and $\deg_G(x_{i+1})\ge 3$.
\end{enumerate}

Let $P=x_1\cdots x_i\in \mathcal{P}_i$ be a path, for some $i$. In view of (ii), we will often look at the extended path of $P$ containing $x_0$ and $x_{i+1}$. Thus, let $P^+$ be the path $P^+=x_0x_1\cdots x_ix_{i+1}$. Note that the paths of $\big\{P^+:P\in\bigcup_{i\ge 1}\mathcal{P}_i\big\}$ are pairwise edge-disjoint. Now, since $rc_2(G)\le r$, we may fix a rainbow $2$-connected $r$-colouring on $G$. By Lemma \ref{lm1}, we have $P^+$ is a rainbow path for any $P\in\bigcup_{i\ge 2}\mathcal{P}_i$. This implies that $\mathcal{P}_i=\emptyset$ for $i\ge r$.

\begin{clm}\label{c1}
We have
\[
\bigg|V_2\setminus \bigcup_{P\in \mathcal{P}_1}V(P)\bigg|=\sum_{i=2}^{r-1}i|\mathcal{P}_i|\le \binom{r}{2}.
\]
\end{clm}

\begin{proof}
First, suppose that there exist two paths $P\in\mathcal P_i$ and $Q\in\mathcal P_j$, for some $2\le i,j\le r-1$, such that $P^+$ and $Q^+$ both contain two consecutive edges using the same pair of colours, say red and blue. Let $u\in V(P)$ and $v\in V(Q)$ be the two vertices between the red and blue edges, and note that $\deg_G(u)=\deg_G(v)=2$. Since $i\ge 2$, we may choose a neighbour $w$ of $u$ such that $\deg_G(w)=2$. Then $w$ and $v$ are not connected by two disjoint rainbow paths, a contradiction.

Now,\, $\bigcup_{i=2}^{r-1}\bigcup_{P\in \mathcal{P}_i}P^+$\, contains\, $\sum_{i=2}^{r-1}i|\mathcal P_i|$\, pairs\, of\, edges,\, with\, each\, pair\, being\, two consecutive edges of some path $P^+$. These pairs of edges use distinct pairs of colours, therefore
\[
\bigg|V_2\setminus \bigcup_{P\in \mathcal{P}_1}V(P)\bigg|=\sum_{i=2}^{r-1}i|\mathcal{P}_i|\le \binom{r}{2}.
\]

This proves Claim \ref{c1}.
\end{proof}

If $|\mathcal{P}_1|\ge \frac{3}{5}n-\frac{1}{10}r(r-1)>0$, then the bipartite subgraph of $G$ induced by  $V(G)\setminus V_2$ and $\{V(P):P\in\mathcal P_1\}$ has $2|\mathcal P_1|\ge\frac{6}{5}n-\frac{1}{5}r(r-1)$ edges. Otherwise, if $|\mathcal{P}_1|\le \frac{3}{5}n-\frac{1}{10}r(r-1)$, then by Claim~\ref{c1},
\begin{align*}
|E(G)|&\ge \frac{1}{2}(2|V_2|+3(n-|V_2|))=\frac{3}{2}n-\frac{1}{2}|V_2|=\frac{3}{2}n-\frac{1}{2}\sum_{i=2}^ri|\mathcal{P}_i|-\frac{1}{2}|\mathcal{P}_1|\\
&\ge \frac{3}{2}n-\frac{1}{2}\binom{r}{2}-\frac{3}{10}n+\frac{1}{20}r(r-1)=\frac{6}{5}n-\frac{1}{5}r(r-1).
\end{align*}
\indent(c) We first consider the case $6\le r\le n-3$. We construct a graph $G_{n,r}$ on $n$ vertices with an $r$-colouring, as follows. First, let $F_6$ be the graph on six vertices $x,y,a,b,c,d$, with the $6$-colouring as shown in Figure 1(a). Then, let $H_m$ be the graph obtained by taking $m\ge 1$ identically $6$-coloured copies of $F_6$, say $F_6^1,\dots, F_6^m$, and identifying the vertices $x$ and $y$. Let $a_i,b_i,c_i,d_i$ be the vertices of $F_6^i$ corresponding to $a,b,c,d$, for $1\le i\le m$. See Figure 1(b) for the case of $H_2$. Next, let $H_{m,r}$ be obtained from $H_m$ by adding an $(r-4)$-ear $P$ at $x$ and $y$. Colour the edges of $P$ with colours $5,6,\dots, r$ such that colours $5$ and $6$ are incident with $x$ and $y$. See Figure 1(c).\\[1ex]
\[ \unit = 0.7cm
\varline{450}{0.6}
\dl{-5.8}{3}{-6.8}{1}\dl{-6.8}{1}{-7.8}{3}\dl{-5.8}{3}{-6.8}{5}\dl{-6.8}{5}{-7.8}{3}
\dl{-5.8}{3}{-6.8}{2.5}\dl{-7.8}{3}{-6.8}{2.5}\dl{-5.8}{3}{-6.8}{3.5}\dl{-7.8}{3}{-6.8}{3.5}
\pt{-5.8}{3}\pt{-6.8}{1}\pt{-7.8}{3}\pt{-6.8}{5}\pt{-6.8}{2.5}\pt{-6.8}{3.5}
\ptll{-7.9}{3}{a}\ptlr{-5.7}{3}{c}\ptlu{-6.8}{3.5}{b}\ptld{-6.8}{2.5}{d}\ptlu{-6.8}{5}{x}\ptld{-6.8}{1}{y}
\ptll{-7.3}{1.9}{2}\ptlr{-6.3}{1.9}{1}\ptll{-7.3}{4.1}{1}\ptlr{-6.3}{4.1}{2}\ptlu{-7.2}{3.2}{3}\ptlu{-6.4}{3.2}{4}\ptld{-7.2}{2.75}{5}\ptld{-6.4}{2.75}{6}
\ptlu{-8.6}{5}{\textup{\normalsize (a)}}
\ptlu{-6.8}{-0.4}{\textup{\normalsize $F_6$}}
%
\dl{-0.3}{1}{-2.8}{3}\dl{-0.3}{1}{2.2}{3}\dl{-0.3}{5}{-2.8}{3}\dl{-0.3}{5}{2.2}{3}\dl{-0.3}{1}{-1}{3}\dl{-0.3}{1}{0.4}{3}\dl{-0.3}{5}{-1}{3}\dl{-0.3}{5}{0.4}{3}
\dl{-1}{3}{-1.5}{2.5}\dl{-1.5}{2.5}{-2.8}{3}\dl{-1}{3}{-1.5}{3.5}\dl{-1.5}{3.5}{-2.8}{3}
\dl{0.4}{3}{0.9}{2.5}\dl{0.9}{2.5}{2.2}{3}\dl{0.4}{3}{0.9}{3.5}\dl{0.9}{3.5}{2.2}{3}
\pt{-2.8}{3}\pt{-0.3}{1}\pt{2.2}{3}\pt{-0.3}{5}\pt{0.4}{3}\pt{0.9}{2.5}\pt{0.9}{3.5}\pt{-1}{3}\pt{-1.5}{2.5}\pt{-1.5}{3.5}
\ptll{-2.8}{3}{a_1}\ptlr{-0.95}{3}{c_1}\ptlu{-1.3}{3.45}{b_1}\ptld{-1.3}{2.55}{d_1}\ptlu{-0.3}{5}{x}\ptld{-0.3}{1}{y}
\ptll{-1.6}{1.9}{2}\ptll{-1.6}{4.1}{1}\ptll{-0.65}{1.9}{1}\ptll{-0.65}{4.1}{2}
\ptlu{-2}{3}{3}\ptlu{-1.2}{3}{4}\ptld{-2}{3}{5}\ptld{-1.2}{3}{6}
\ptlr{1}{1.9}{1}\ptlr{1}{4.1}{2}
\ptlr{2.25}{3}{c_2}\ptll{0.35}{3}{a_2}\ptlu{0.7}{3.45}{b_2}\ptld{0.7}{2.55}{d_2}
\ptlr{0}{1.9}{2}\ptlr{0}{4.1}{1}
\ptlu{0.6}{3}{3}\ptlu{1.4}{3}{4}\ptld{1.4}{3}{6}\ptld{0.6}{3}{5}
\ptlr{1}{1.9}{1}\ptlr{1}{4.1}{2}
\ptlu{-3.6}{5}{\textup{\normalsize (b)}}
\ptlu{-0.3}{-0.4}{\textup{\normalsize $H_2$}}
%
\dotline
\ellipse{6.7}{3}{1.5}{2}
\varline{450}{0.6}
\dl{6.7}{1}{8.2}{1}\dl{8.2}{1}{8.9}{1.7}\dl{6.7}{5}{8.2}{5}\dl{8.2}{5}{8.9}{4.3}\dl{8.9}{1.7}{8.9}{2.2}\dl{8.9}{4.3}{8.9}{3.8}
\ptlu{8.9}{2.58}{\vdots}
\pt{6.7}{1}\pt{6.7}{5}\pt{8.2}{1}\pt{8.2}{5}\pt{8.9}{1.7}\pt{8.9}{4.3}
\ptlu{6.7}{2.65}{\textup{\normalsize $H_m$}}\ptlu{6.7}{5}{x}\ptld{6.7}{1}{y}\ptlu{7.45}{5}{5}\ptld{7.45}{1}{6}
\ptlu{8.7}{4.55}{7}\ptld{8.7}{1.45}{r}
\ptlr{8.95}{4}{\textup{\normalsize $P$}}
\ptlu{4.4}{5}{\textup{\normalsize (c)}}
\ptlu{7.05}{-0.4}{\textup{\normalsize $H_{m,r}$}}
\ptlu{0}{-1.6}{\textup{\normalsize Figure 1. The graphs $F_6$, $H_2$ and $H_{m,r}$}}
\]\\[1ex]
\indent We have $|V(H_{m,r})|=4m+r-3$. Let $m$ and $b$ be integers such that  $n=4m+r-3+b$, where  $0\le b\le 3$. Note that $m\ge 1$ since $n\ge r+3$. Let $G_{n,r}$ be the graph obtained from $H_{m,r}$ by adding $2$-ears $Q_1,\dots, Q_b$ at $x$ and $y$, with the colouring as shown in Figure 2. Note that $G_{n,r}=H_{m,r}$ if $b=0$. If $b\ge 1$, let $w_j$ be the middle vertex of $Q_j$ for $1\le j\le b$. We have $G_{n,r}$ is a $2$-connected graph on $n$ vertices, and $|E(G_{n,r})|=8m+r-4+2b=2n-r+2$.\\[1ex]
\[ \unit = 0.7cm
\varline{450}{0.6}
\dotline
\ellipse{-6.3}{2.5}{1}{1.5}
\varline{450}{0.6}
\bez{-6.3}{1}{-7.3}{1}{-8.3}{2.5}\bez{-6.3}{4}{-7.3}{4}{-8.3}{2.5}
\pt{-6.3}{1}\pt{-8.3}{2.5}\pt{-6.3}{4}
\ptlu{-6.3}{4}{x}\ptld{-6.3}{1}{y}\ptll{-8.3}{2.5}{w_1}
\ptll{-7.5}{1.4}{1}\ptll{-7.8}{3.2}{1}
\ptlu{-6.3}{2.15}{\textup{\normalsize $H_{m,r}$}}
\ptlu{-7.5}{3.7}{\textup{\normalsize $Q_1$}}
\ptlu{-6.8}{-0.4}{\textup{\normalsize $b=1$}}
%
\dotline
\ellipse{-1.3}{2.5}{1}{1.5}
\varline{450}{0.6}
\bez{-1.3}{1}{-2.3}{1}{-3.3}{2.5}\bez{-1.3}{4}{-2.3}{4}{-3.3}{2.5}\bez{-1.3}{1}{-0.3}{1}{0.7}{2.5}\bez{-1.3}{4}{-0.3}{4}{0.7}{2.5}
\pt{0.7}{2.5}\pt{-3.3}{2.5}\pt{-1.3}{1}\pt{-1.3}{4}
\ptlu{-1.3}{4}{x}\ptld{-1.3}{1}{y}\ptll{-3.3}{2.5}{w_1}\ptlr{0.7}{2.5}{w_2}
\ptll{-2.5}{1.4}{1}\ptll{-2.8}{3.2}{1}\ptlr{-0.1}{1.4}{2}\ptlr{0.2}{3.2}{2}
\ptlu{-1.3}{2.15}{\textup{\normalsize $H_{m,r}$}}
\ptlu{-2.5}{3.7}{\textup{\normalsize $Q_1$}}\ptlu{-0.1}{3.7}{\textup{\normalsize $Q_2$}}
\ptlu{-1.3}{-0.4}{\textup{\normalsize $b=2$}}
%
\dotline
\ellipse{6.2}{2.5}{1}{1.5}
\varline{450}{0.6}
\bez{6.2}{1}{5.2}{1}{4.2}{2.5}\bez{6.2}{4}{5.2}{4}{4.2}{2.5}\bez{6.2}{1}{7.2}{1}{8.2}{2.5}\bez{6.2}{4}{7.2}{4}{8.2}{2.5}\bez{6.2}{4}{3.4}{4}{3.2}{2.5}\bez{6.2}{1}{3.4}{1}{3.2}{2.5}
\pt{3.2}{2.5}\pt{4.2}{2.5}\pt{8.2}{2.5}\pt{6.2}{1}\pt{6.2}{4}
\ptlu{6.2}{4}{x}\ptld{6.2}{1}{y}\ptll{4.2}{2.5}{w_1}\ptlr{8.2}{2.5}{w_2}\ptll{3.2}{2.5}{w_3}
\ptlu{5}{1.5}{1}\ptld{5}{3.45}{1}\ptlr{7.4}{1.4}{2}\ptlr{7.7}{3.2}{2}\ptll{3.75}{1.4}{3}\ptll{3.4}{3.2}{4}
\ptlu{6.2}{2.15}{\textup{\normalsize $H_{m,r}$}}
\ptlu{4.15}{2.8}{\textup{\normalsize $Q_1$}}\ptlu{7.4}{3.7}{\textup{\normalsize $Q_2$}}\ptlu{4}{3.7}{\textup{\normalsize $Q_3$}}
\ptlu{5.7}{-0.4}{\textup{\normalsize $b=3$}}
\ptlu{0}{-1.6}{\textup{\normalsize Figure 2. The graph $G_{n,r}$ for $b\in\{1,2,3\}$}}
\]\\[1ex]
\indent To complete the third part of (c), it remains to prove the following claim.
\begin{clm}\label{c2}
The $r$-colouring defined on $G_{n,r}$ is rainbow $2$-connected.
\end{clm}
\begin{proof}
Let $u,v\in V(G_{n,r})$. We show that there are two disjoint rainbow paths connecting $u$ and $v$. This is easy to check if at least one of $u,v$ is $x$ or $y$; or if $u,v$ belong to the same copy of $F_6$; or $u,v\in V(P)$; or $u,v\in\{w_1,w_2,w_3\}$; or $u\in\{w_1,w_2,w_3\}$ and $v\in V(P)$. It remains to check the following cases.
\begin{itemize}
\item Let $u$ and $v$ belong to two different copies of $F_6$. It suffices to consider the cases $(u,v)=(a_1,a_2), (a_1,b_2), (a_1,c_2), (b_1,b_2), (b_1,d_2)$. By consulting Figure 1(b), it is easy to check that for these cases, $u$ and $v$ are connected by two disjoint rainbow paths.
\item Let $u$ belong to a copy of $F_6$, say $F_6^1$, and $v\in V(P)\setminus\{x,y\}$. It is easy to check that $u$ and $v$ are connected by two disjoint rainbow paths. In the case $u=d_1$, we take the paths $ua_1yPv$ and $uc_1xPv$.
\item Let $u$ belong to a copy of $F_6$, say $F_6^1$, and $v\in\{w_1,w_2,w_3\}$. If $(u,v)=(a_1,w_1)$, we take $ub_1c_1xv$ and $uyv$. The case $(u,v)=(a_1,w_2)$ is similar, and the case $(u,v)=(a_1,w_3)$ is easy. The case $u=c_1$ and $v\in \{w_1,w_2,w_3\}$ is similar to these previous three cases. If $u\in\{b_1,d_1\}$ and $v=w_1$, we take $uc_1xv$ and $ua_1yv$. If $u\in\{b_1,d_1\}$ and $v\in\{w_2,w_3\}$, we take $ua_1xv$ and $uc_1yv$.
\end{itemize}

This proves Claim \ref{c2}.
\end{proof}

Next, suppose that $r=5$. Let $F_5$ be the $5$-coloured graph as shown in Figure 3(a). As before, construct the graph $H_m$ by taking $m\ge 1$ copies of $F_5$ and identifying the vertices $x$ and $y$. Then, let $H_{m,5}$ be the graph obtained from $H_m$ by adding a $3$-ear at $x$ and $y$, with the colouring as shown in Figure 3(b). We have $|V(H_{m,5})|=3m+4$. Let $m$ and $b$ be integers such that $n=3m+4+b$, where $0\le b\le 2$. Note that $m\ge 1$ since $n\ge 7$. Let $G_{n,5}$ be the graph obtained from $H_{m,5}$ by adding $2$-ears $Q_1,\dots, Q_b$ at $x$ and $y$, with both edges of $Q_i$ given colour $i$ for $1\le i\le b$. See Figure 3(c). Note that $G_{n,5}=H_{m,5}$ if $b=0$. We have $G_{n,5}$ is a $2$-connected graph on $n$ vertices, and $|E(G_{n,5})|=7m+3+2b\le\frac{7n-19}{3}$. By a similar argument as in Claim \ref{c2}, the $5$-colouring of $G_{n,5}$ is rainbow $2$-connected. This proves the second part of (c).\\[-1ex]
\[ \unit = 0.7cm
\varline{450}{0.6}
\dl{-6.2}{3}{-7.2}{1}\dl{-7.2}{1}{-8.2}{3}\dl{-6.2}{3}{-7.2}{5}\dl{-7.2}{5}{-8.2}{3}
\dl{-6.2}{3}{-7.2}{3.5}\dl{-8.2}{3}{-7.2}{3.5}\dl{-8.2}{3}{-6.2}{3}
\pt{-6.2}{3}\pt{-7.2}{1}\pt{-8.2}{3}\pt{-7.2}{5}\pt{-7.2}{3.5}
\ptll{-8.3}{3}{a}\ptlr{-6.1}{3}{c}\ptlu{-7.2}{3.5}{b}\ptlu{-7.2}{5}{x}\ptld{-7.2}{1}{y}
\ptll{-7.7}{1.9}{2}\ptlr{-6.7}{1.9}{1}\ptll{-7.7}{4.1}{1}\ptlr{-6.7}{4.1}{2}\ptlu{-7.6}{3.2}{3}\ptlu{-6.8}{3.2}{4}\ptld{-7.2}{3}{5}
\ptlu{-9}{5}{\textup{\normalsize (a)}}
\ptlu{-7.2}{-0.4}{\textup{\normalsize $F_5$}}
%
\dotline
\ellipse{-2.7}{3}{1}{1.5}
\varline{450}{0.6}
\dl{-2.7}{1.5}{-1.2}{1.5}\dl{-2.7}{4.5}{-1.2}{4.5}\dl{-1.2}{1.5}{-1.2}{4.5}
\pt{-2.7}{1.5}\pt{-2.7}{4.5}\pt{-1.2}{1.5}\pt{-1.2}{4.5}
\ptlu{-2.7}{4.5}{x}\ptld{-2.7}{1.5}{y}\ptlu{-1.95}{4.5}{3}\ptld{-1.95}{1.5}{4}\ptlr{-1.15}{3}{5}
\ptlu{-2.7}{2.65}{\textup{\normalsize $H_m$}}
\ptlu{-4.5}{5}{\textup{\normalsize (b)}}
\ptlu{-2.45}{-0.4}{\textup{\normalsize $H_{m,5}$}}
%
\dotline
\ellipse{3.3}{3}{1}{1.5}
\varline{450}{0.6}
\bez{3.3}{1.5}{2.3}{1.5}{1.3}{3}\bez{3.3}{4.5}{2.3}{4.5}{1.3}{3}
\pt{3.3}{1.5}\pt{1.3}{3}\pt{3.3}{4.5}
\ptlu{3.3}{4.5}{x}\ptld{3.3}{1.5}{y}\ptll{1.3}{3}{w_1}
\ptll{2.1}{1.9}{1}\ptll{1.8}{3.7}{1}
\ptlu{3.3}{2.65}{\textup{\normalsize $H_{m,5}$}}
\ptlu{2.1}{4.2}{\textup{\normalsize $Q_1$}}
%
\dotline
\ellipse{7.3}{3}{1}{1.5}
\varline{450}{0.6}
\bez{7.3}{1.5}{6.3}{1.5}{5.3}{3}\bez{7.3}{4.5}{6.3}{4.5}{5.3}{3}\bez{7.3}{1.5}{8.3}{1.5}{9.3}{3}\bez{7.3}{4.5}{8.3}{4.5}{9.3}{3}
\pt{9.3}{3}\pt{5.3}{3}\pt{7.3}{1.5}\pt{7.3}{4.5}
\ptlu{7.3}{4.5}{x}\ptld{7.3}{1.5}{y}\ptll{5.3}{3}{w_1}\ptlr{9.3}{3}{w_2}
\ptll{6.1}{1.9}{1}\ptll{5.8}{3.7}{1}\ptlr{8.5}{1.9}{2}\ptlr{8.8}{3.7}{2}
\ptlu{7.3}{2.65}{\textup{\normalsize $H_{m,5}$}}
\ptlu{6.1}{4.2}{\textup{\normalsize $Q_1$}}\ptlu{8.5}{4.2}{\textup{\normalsize $Q_2$}}
\ptlu{5.3}{-0.4}{\textup{\normalsize $G_{n,5}$ for $b\in\{1,2\}$}}
\ptlu{0.5}{5}{\textup{\normalsize (c)}}
\ptlu{0}{-1.4}{\textup{\normalsize Figure 3. The graphs $F_5$, $H_{m,5}$ and $G_{n,5}$}}
\]\\[-0.5ex]
\indent Finally, let $r\in\{3,4\}$. Let $F_3$ be the $3$-coloured graph as shown in Figure 4(a). Let $H_m$ be the graph obtained by taking $m\ge 1$ copies of $F_3$ and identifying the vertices $x$ and $y$. We have $|V(H_m)|=2m+2$. Let $n=2m+2+b$, where $0\le b\le 1$, and note that $m\ge 1$ since $n\ge r+1\ge 4$. If $n$ is even, set $G_{n,3}=H_m$. If $n$ is odd, let $G_{n,3}$ be obtained by adding a $2$-ear to $H_m$ at $x$ and $y$, with both edges of the ear given colour $3$. See Figure 4(b). We have $G_{n,3}$ is a $2$-connected graph on $n$ vertices, and $|E(G_{n,3})|=5m+2b\le\frac{5}{2}n-5$. It is easy to check that the $3$-colouring of $G_{n,3}$ is rainbow $2$-connected. This proves the first part of (c).\\[-0.5ex]
\[ \unit = 0.7cm
\varline{450}{0.6}
\dl{-1.9}{2.5}{-2.9}{1}\dl{-2.9}{1}{-3.9}{2.5}\dl{-1.9}{2.5}{-2.9}{4}\dl{-2.9}{4}{-3.9}{2.5}\dl{-1.9}{2.5}{-3.9}{2.5}
\pt{-1.9}{2.5}\pt{-2.9}{1}\pt{-3.9}{2.5}\pt{-2.9}{4}
\ptll{-4}{2.5}{a}\ptlr{-1.85}{2.5}{c}\ptlu{-2.9}{4}{x}\ptld{-2.9}{1}{y}
\ptll{-3.4}{1.65}{2}\ptlr{-2.4}{1.65}{1}\ptll{-3.4}{3.35}{1}\ptlr{-2.4}{3.35}{2}\ptld{-2.9}{2.5}{3}
\ptlu{-4.7}{4}{\textup{\normalsize (a)}}
\ptlu{-2.9}{-0.4}{\textup{\normalsize $F_3$}}
%
\dotline
\ellipse{3.9}{2.5}{1}{1.5}
\varline{450}{0.6}
\bez{3.9}{1}{2.9}{1}{1.9}{2.5}\bez{3.9}{4}{2.9}{4}{1.9}{2.5}
\pt{3.9}{1}\pt{3.9}{4}\pt{1.9}{2.5}
\ptlu{3.9}{4}{x}\ptld{3.9}{1}{y}\ptll{1.85}{2.5}{w}
\ptll{2.7}{1.35}{3}\ptll{2.7}{3.6}{3}
\ptlu{1.1}{4}{\textup{\normalsize (b)}}
\ptlu{3.9}{2.15}{\textup{\normalsize $H_m$}}
\ptlu{3.4}{-0.4}{\textup{\normalsize $G_{n,3}$, $n$ odd}}
\ptlu{0}{-1.4}{\textup{\normalsize Figure 4. The graphs $F_3$ and $G_{n,3}$}}
\]\\[-1ex]
\indent(d) We first prove the two upper bounds. For $n\ge 4$, we construct a graph $G_1$ on $n$ vertices, with $|E(G_1)|=n+1$ and $rc_2(G_1)\le n-1$, as follows. We take the cycle $C_{n-1}=v_1v_2\cdots v_{n-1}v_1$, and connect another vertex $x$ to $v_1$ and $v_2$. Now, define the $(n-1)$-colouring $c_1$ on $G_1$,  where $c_1(v_iv_{i+1})=i$ for $1\le i\le n-1$ (with $v_n=v_1$), $c_1(xv_1)=2$, and $c_1(xv_2)=n-1$. Then $|E(G_1)|=n+1$, and it is easy to check that $c_1$ is a rainbow $2$-connected colouring of $G_1$, so that $rc_2(G_1)\le n-1$. It follows that $t_2(n,n-1)\le n+1$.

Next, let $n\ge 6$. We construct a graph $G_2$ on $n$ vertices, with $|E(G_2)|=n+2$ and $rc_2(G_2)\le n-2$, as follows. We take the cycle $C_{n-2}=v_1v_2\cdots v_{n-2}v_1$, and connect a vertex $x$ to $v_1$ and $v_2$, and a vertex $y$ to $v_2$ and $v_3$. Now, define the $(n-2)$-colouring $c_2$ on $G_2$,  where $c_2(v_iv_{i+1})=i$ for $1\le i\le n-2$ (with $v_{n-1}=v_1$), $c_2(xv_1)=2$, $c_2(xv_2)=n-2$, $c_2(yv_2)=3$, and $c_2(yv_3)=1$. Then $|E(G_2)|=n+2$, and it is easy to check that $c_2$ is a rainbow $2$-connected colouring of $G_2$, so that $rc_2(G_2)\le n-2$. It follows that $t_2(n,n-2)\le n+2$.

Now we prove the two lower bounds. First, let $n\ge 4$. Recall that the cycle $C_n$ is the unique $2$-connected graph on $n$ vertices with the minimum of edges, which is $n$. However, $rc_2(C_n)=n$. It follows that $t_2(n,n-1)\ge n+1$.

Next, let $n\ge 6$. By (\ref{t2chain}) and the above, we have $t_2(n,n-2)\ge t_2(n,n-1)\ge n+1$. Suppose that $G$ is a $2$-connected graph on $n$ vertices, and $|E(G)|=n+1$. We prove that $rc_2(G)\ge n-1$, and this implies the lower bound $t_2(n,n-2)\ge n+2$. By the remark after Theorem \ref{EDthm}, $G$ must be a cycle with one ear attached. Thus $G$ is a $\Theta$-graph, i.e., $G$ consists of three disjoint paths connecting two vertices, say $x$ and $y$. Let $Q_1,Q_2,Q_3$ be the three paths, on $q_1,q_2\ge 3$ and $q_3\ge 2$ vertices. Let $x_i,y_i\in V(Q_i)$ be the neighbours of $x$ and $y$ for $i=1,2$, and for $i=3$ if $q_3\ge 3$. Assume that there exists a rainbow $2$-connected colouring of $G$ with at most $n-2$ colours. In the two claims and the subsequent argument below, we obtain a contradiction by finding two vertices which are not connected by two disjoint rainbow paths. Call such a pair of vertices \emph{bad}. For convenience, whenever we state an edge $wz\in E(Q_i)$ below, we have $d_{Q_i}(x,w)=d_{Q_i}(x,z)-1$.

\begin{clm}\label{clmA}
If there exist two edges $ab\in E(Q_i)$ and $cd\in E(Q_j)$ with the same colour, for some $i\neq j$ with $q_i,q_j\ge 3$, then either $a=x$ and $d=y$, or $b=y$ and $c=x$.
\end{clm}
\begin{proof}
Suppose first that $a,b$ are internal vertices of $Q_i$. Then $\{a,c\}$ (resp.~$\{b,d\}$) is bad if $c$ (resp.~$d$) is an internal vertex of $Q_j$. Now suppose that $a=x$. If $d\neq y$, then $\{b,d\}$ is bad. Similarly, if $b=y$, then we have $c=x$.
\end{proof}


\begin{clm}\label{clmC}
There do not exist three edges with the same colour.
\end{clm}
\begin{proof}
Suppose that there exist three edges using the same colour, say red. If these red edges occur in all three paths, then we have a contradiction to Claim \ref{clmA}, unless (without loss of generality) $q_3=2$ and $xx_1,y_2y,xy$ are all red. But then $\{x_1,y\}$ is bad. Thus, some $Q_i$ contains two red edges. Lemma \ref{lm1} implies that $q_i=3$, so there is a red edge in $Q_j$ for some $j\neq i$. Claim \ref{clmA} then implies $q_j=2$, so that $j=3$, and we may assume that $i=2$. Claim \ref{clmA} again implies that there cannot exist a red edge in $Q_1$. Since $|E(G)|=n+1$ and we have at most $n-2$ colours, $Q_1$ must have two edges with the same colour other than red. But then $\{x,y\}$ is bad.
\end{proof}

Thus, since $|E(G)|=n+1$ and we have at most $n-2$ colours, Claim \ref{clmC} implies that there exist three colours, where for each colour, there are exactly two edges using the colour. Let red, blue and green be these three colours. \\[1ex]
\emph{Case 1.} $q_3=2$.\\[1ex]
\indent By Claim \ref{clmA}, it follows that $Q_3=xy$ must be one of the three colours, say green. If say $Q_2$ contains both blue edges, then Lemma \ref{lm1} implies $q_2=3$. But then both red edges occur in $Q_1$, and $\{x,y\}$ is bad. Thus by Claim \ref{clmA}, we may assume that $xx_1, y_2y$ are red, and $xx_2,y_1y$ are blue. Now, the remaining green edge is in $Q_i$ for some $i\in\{1,2\}$, and $\{x,x_i\}$ is bad.\\[1ex]
\emph{Case 2.} $q_3\ge 3$.\\[1ex]
\indent Suppose, say, that the two green edges are in $Q_3$. By Lemma \ref{lm1}, $q_3=3$, and $xx_3,x_3y$ are green. If, say, the two blue edges are in $Q_2$, then Lemma \ref{lm1} implies that $q_2=3$, and $xx_2,x_2y$ are blue. But then $\{x,y\}$ is bad. Otherwise, Claim \ref{clmA} implies that, we may assume $xx_1, y_2y$ are red, and $xx_2,y_1y$ are blue. But then $\{x,x_1\}$ is bad.

Hence, none of the three paths may contain two edges in red, blue, or green. Now by Claim \ref{clmA}, we may assume that $xx_1,y_2y$ are red, $xx_2,y_3y$ are blue, and $xx_3,y_1y$ are green. But then, since $n\ge 6$, we have $q_i\ge 4$ for some $i$, and $\{x_i,y_i\}$ is bad.

Therefore, we have $rc_2(G)\ge n-1$, as required.\\[1ex]
\indent This completes the proof of Theorem \ref{t2(n,r)thm}.
\end{proof}


\section{The function $s_2(n,r)$}\label{s2(n,r)sect}

Let $n>k\ge 2$. If $n\ge (k+1)^2$, then by Theorem \ref{CJMZthm}(b), we have $s_k(n,2)={n\choose 2}$. Now, define
\[
M_{n,k}=\max\{r:rc_k(G)=r\textup{ for some }k\textup{-connected graph $G$ on $n$ vertices$\}$.}
\]
Then $s_k(n,r)$ is well-defined if and only if $2\le r\le M_{n,k}$. Together with Theorem \ref{Harthm}, we obtain the following analogue to (\ref{s1chain})
\[
{n\choose 2} = s_k(n,2)\ge s_k(n,3)\ge\cdots\ge s_k(n,M_{n,k})\ge\Big\lceil\frac{kn}{2}\Big\rceil.
\]

Now, we focus on the function $s_2(n,r)$. Let $n\ge 4$. Theorem \ref{CJMZthm}(a) implies that $s_2(n,2)={n\choose 2}$. Theorem \ref{LLthm} then implies that $s_2(n,r)$ is well-defined if and only if $2\le r\le n$. Moreover, if $G$ is a $2$-connected graph on $n$ vertices with $|E(G)|\ge n+1$, then $G\neq C_n$, so Theorem \ref{LLthm} gives $rc_2(G)\le n-1$. This implies that $s_2(n,n)=n$. Therefore, for $n\ge 4$,
\[
{n\choose 2} = s_2(n,2)\ge s_2(n,3)\ge\cdots\ge s_2(n,n)=n.
\]
Thus, it remains to consider $3\le r\le n-1$. We provide lower bounds for $s_2(n,r)$ in the following proposition.

\begin{prop}\label{s2(n,r)prop}
\indent\\[-3ex]
\begin{enumerate}
\item[(a)] Let $3\le r\le n-1$. Then $s_2(n,r)\ge {n-r+2 \choose 2}+r-1$.
\item[(b)] Let $n\ge 6$, and $\frac{n}{2}+2\le r\le n-1$. Then $s_2(n,r)\ge {n-r+3\choose 2}+r-3$.
\end{enumerate}
\end{prop}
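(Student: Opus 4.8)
The plan is to exhibit, for each part, an explicit dense $2$-connected graph on $n$ vertices whose rainbow $2$-connection number is at least $r$: a complete graph with one long ear attached, the natural $2$-connected analogue of the extremal graph for $s_1(n,r)$ in Theorem~\ref{f1thm}. For part (a) I would take $G$ to consist of $K_{n-r+2}$ together with an ear $P=p_0p_1\cdots p_{r-1}$ whose endpoints $p_0=x$ and $p_{r-1}=y$ lie in the clique; then $|V(G)|=n$, $|E(G)|=\binom{n-r+2}{2}+(r-1)$, and $G$ is $2$-connected. To prove $rc_2(G)\ge r$, assume for contradiction that there is a rainbow $2$-connected colouring with at most $r-1$ colours. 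If $r\ge 4$, the ear has length $\ge 3$ with all internal vertices of degree two, so Lemma~\ref{lm1} makes it rainbow; hence exactly $r-1$ colours occur, and after relabelling the edge $p_{i-1}p_i$ has colour $i$. Consider the pair $\{x,p_1\}$: one of its two disjoint rainbow paths is the edge $xp_1$, and since every internal ear vertex has degree two the other is forced to traverse $p_1p_2\cdots p_{r-2}y$ and then return to $x$ inside the clique; its ear part already uses colours $2,\dots,r-1$, so its clique part may use colour $1$ only and must therefore be the single edge $xy$, giving $c(xy)=1$. The symmetric argument applied to $\{y,p_{r-2}\}$ forces $c(xy)=r-1$, a contradiction since $r\ge 4$. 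The case $r=3$, where Lemma~\ref{lm1} no longer applies, is handled directly: with only two colours, the pairs $\{p_1,v\}$ for $v\in V(K_{n-1})\setminus\{x,y\}$ together with $\{p_1,x\}$ and $\{p_1,y\}$ force $c(xp_1)=c(yp_1)$ and force every clique edge incident with $x$ or $y$ to receive the other colour, after which $\{x,y\}$ has no second rainbow path.

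For part (b) I would take $G$ to consist of $K_{n-r+3}-xy$ together with an ear $P=p_0p_1\cdots p_{r-2}$ with $p_0=x$ and $p_{r-2}=y$; then $|V(G)|=n$, $|E(G)|=\binom{n-r+3}{2}+(r-3)$, and since $n\ge 6$ forces $r\ge 5$ (so that $K_{n-r+3}$ has at least four vertices) $G$ is $2$-connected. Assume a rainbow $2$-connected colouring with at most $r-1$ colours. As the ear has length $r-2\ge 3$, Lemma~\ref{lm1} makes it rainbow, say with colours $e_1,\dots,e_{r-2}$ on $xp_1,p_1p_2,\dots,p_{r-3}y$. The crucial point is that $x$ and $y$ are \emph{non-adjacent} in the clique, so every $x$--$y$ path contained in the clique has length at least two. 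I would apply this to the pairs $\{x,p_1\}$, to $\{p_i,p_{i+1}\}$ for $1\le i\le r-4$, and to $\{y,p_{r-3}\}$: in each case one disjoint rainbow path is a single ear edge, and the other is forced, because the internal ear vertices have degree two, to run around the whole ear and close up through the clique; its ear part uses all of $e_1,\dots,e_{r-2}$ except one, say $e_{i+1}$, so its clique part avoids these $r-3$ colours. Since that clique part has length $\ge 2$ it needs $\ge 2$ colours, which already forces $c=r-1$; writing $\star$ for the one colour not on the ear, the clique part then has length exactly two, yielding a vertex $z_i\in V(K_{n-r+3})\setminus\{x,y\}$ with $\{c(xz_i),c(z_iy)\}=\{e_{i+1},\star\}$. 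Letting $i$ range over $\{0,1,\dots,r-3\}$ and using that $e_1,\dots,e_{r-2}$ are distinct, the $z_i$ are $r-2$ distinct vertices of the $(n-r+1)$-element set $V(K_{n-r+3})\setminus\{x,y\}$; hence $r-2\le n-r+1$, i.e.\ $r\le\frac{n+3}{2}$, contradicting $r\ge\frac n2+2$. Therefore $rc_2(G)\ge r$.

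I expect the main obstacle to be the structural step that the \emph{second} of the two disjoint rainbow paths between the chosen pair is essentially forced. Because each chosen pair sits at, or just inside, the ends of a long induced path of degree-two vertices, that second path has no freedom within the ear and must run its full length; then the colours already spent along the ear, combined with the fact that a rainbow path of length $\ell$ needs $\ell$ distinct colours, pin down the remaining short clique segment completely. Carrying out the colour bookkeeping precisely—so that in part (b) one extracts exactly $r-2$ distinct vertices $z_i$, which is what makes the argument bite precisely at the threshold $r\ge\frac n2+2$ with no slack—is the delicate part; verifying the edge counts and the $2$-connectivity of the two constructions is routine.
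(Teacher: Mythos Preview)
Your constructions and overall strategy match the paper's exactly, and the proof is correct. Part~(b) is essentially identical to the paper's argument: you extract $r-2$ distinct intermediate clique vertices $z_i$ and compare with the $n-r+1$ available, while the paper exhausts the $n-r+1$ vertices $u_j$ one pair at a time and then finds one pair too many; these are the same pigeonhole.

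The one genuine difference is the endgame in part~(a) for $r\ge 4$. The paper, after fixing the ear colours, looks at the pairs $\{v_1,u_j\}$ and $\{v_{r-2},u_j\}$ to force $c(v_0u_j)=c(v_{r-1}u_j)=r-1$ for \emph{every} non-ear clique vertex $u_j$, and then observes that $\{v_0,v_{r-1}\}$ has no second rainbow path. You instead use only the two pairs $\{x,p_1\}$ and $\{y,p_{r-2}\}$ to pin $c(xy)$ to both $1$ and $r-1$; this is shorter and never touches the $u_j$'s at all. Both arguments rest on the same mechanism---the ear eats $r-2$ of the $r-1$ colours, so the clique segment of the second rainbow path collapses to a single edge---but yours reaches the contradiction with two pairs instead of $2(n-r)+1$. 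Your separate treatment of $r=3$ is also correct and coincides with what the paper's uniform argument reduces to in that case.
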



\begin{proof}
(a) Let $G$ be the graph on $n$ vertices, obtained by taking the complete graph $K_{n-r+2}$ and attaching an $(r-1)$-ear $P$ at two vertices $v_0,v_{r-1}$. Then $|E(G)|={n-r+2 \choose 2}+r-1$. We show that $rc_2(G)\ge r$, which implies that $s_2(n,r)\ge {n-r+2 \choose 2}+r-1$.

Let $P=v_0v_1\cdots v_{r-1}$ and $u_1,\dots,u_{n-r}$ be the remaining vertices of $G$. Suppose that there exists a rainbow $2$-connected colouring $c$ of $G$, using at most $r-1$ colours. The two disjoint rainbow paths connecting $v_{r-1}$ and $v_{r-2}$ must be $v_{r-1}v_{r-2}$ and $v_{r-1}v_0v_1\cdots v_{r-2}$, so we may assume that $c(v_{i-1}v_i)=i$ for $1\le i\le r-2$ and $c(v_0v_{r-1})=r-1$. Similarly, by considering $v_0$ and $v_1$, the path $v_1v_2\cdots v_{r-1}v_0$ must be rainbow, and so $c(v_{r-2}v_{r-1})=1$. Now for any $1\le j\le n-r$, to connect $v_1$ and $u_j$ with two disjoint rainbow paths, one path must be $v_1v_2\cdots v_{r-1}u_j$. Thus $c(v_{r-1}u_j)=r-1$. Similarly, by considering $v_{r-2}$ and $u_j$, we have $c(v_0u_j)=r-1$. But now, we see that there do not exist two disjoint rainbow paths connecting $v_0$ and $v_{r-1}$. Therefore, $rc_2(G)\ge r$.\\[1ex]
\indent (b) Note that we have $r\ge 5$. We construct the graph $G$ on $n\ge 6$ vertices as follows. Take the complete graph $K_{n-r+3}$ and delete one edge $v_0v_{r-2}$. Then add an $(r-2)$-ear $P$ at $v_0$ and $v_{r-2}$. We have $|E(G)|= {n-r+3\choose 2}+r-3$. We show that $rc_2(G)\ge r$, which implies that $s_2(n,r)\ge {n-r+3\choose 2}+r-3$.

Let $P=v_0v_1\cdots v_{r-2}$ and $u_1,\dots,u_{n-r+1}$ be the remaining vertices of $G$. Suppose that there exists a rainbow $2$-connected colouring $c$ of $G$, using at most $r-1$ colours. Since $P$ has length $r-2\ge 3$, Lemma \ref{lm1} implies that $P$ must be rainbow coloured. We may assume that $c(v_{i-1}v_i)=i$ for $1\le i\le r-2$. Now for $1\le i\le r-2$, to connect $v_{i-1}$ and $v_i$ with two disjoint rainbow paths, one path must be $v_{i-1}Pv_0u_jv_{r-2}Pv_i$, for some $1\le j\le n-r+1$. Note that $n-r+2\le r-2$, so that the vertices $v_0,v_1,\dots,v_{n-r+2}$ all exist. By considering the vertices $v_{i-1}$ and $v_i$ for $1\le i\le n-r+1$, we may assume that for $1\le i\le n-r+1$, the path $v_0u_iv_{r-2}$ uses the colours $i$ and $r-1$. But now, we see that there do not exist  two disjoint rainbow paths connecting $v_{n-r+1}$ and $v_{n-r+2}$. Therefore, $rc_2(G)\ge r$.
\end{proof}


For $3\le r\le n-1$, we believe that the lower bounds in Proposition \ref{s2(n,r)prop} are close to the correct values of $s_2(n,r)$. We propose the following problem.

\begin{pro}
For $3\le r\le n-1$, determine the function $s_2(n,r)$. Do there exist $n$ and $r$ such that $s_2(n,r)$ is equal to either lower bound in Proposition \ref{s2(n,r)prop}?
\end{pro}


\section*{Acknowledgements}
Shinya Fujita is supported by JSPS KAKENHI (No.~19K03603). Henry Liu is partially supported by the Startup Fund of One Hundred Talent Program of SYSU, and National Natural Science Foundation of China (No.~11931002). Boram Park is supported by Basic Science Research Program through the National Research Foundation of Korea (NRF) funded by the Ministry of Science, ICT and Future Planning (NRF-2018R1C1B6003577).

Shinya\, Fujita\, and\, Boram\, Park\, acknowledge\, the\, generous\, hospitality\, of\, Sun\, Yat-sen University, Guangzhou, China. They were able to carry out part of this research with Henry Liu during their visits there in 2019.

The\, authors\, would\, like\, to\, thank\, the\, anonymous\, referee\, for\, valuable\, comments\, and suggestions.

\end{document}